\documentclass[reqno, 14pt]{amsart}
\usepackage{amssymb,graphicx}
\usepackage{amsmath,tikz-cd}
\usepackage{amscd}
\usepackage{amsfonts}
\usepackage{mathrsfs}
\usepackage{stmaryrd, xcolor}
\usepackage{txfonts}
\usepackage{bbm}
\newtheorem{theorem}{Theorem}[section]
\newtheorem{prop}[theorem]{Proposition}
\newtheorem{defn}[theorem]{Definition}
\newtheorem{lemma}[theorem]{Lemma}
\newtheorem{coro}[theorem]{Corollary}
\newtheorem{prop-def}{Proposition-Definition}[section]

\newtheorem{remark}[theorem]{Remark}

\newtheorem{exam}[theorem]{Example}

\textheight 23cm \textwidth 16cm \topmargin -0.8cm
\begin{document}
\setlength{\oddsidemargin}{0cm} \setlength{\evensidemargin}{0cm}

\title{anti-associative dendriform  algebras}

\author{Zafar Normatov}

\address{ School of Mathematics, Jilin University, Changchun, 130012, China,\\
Institute of Mathematics, Uzbekistan Academy of Sciences, University Street, 9, Olmazor district, Tashkent,
100174, Uzbekistan} \email{z.normatov@inbox.ru}

\def\shorttitle{anti-associative dendriform  algebras}

\begin{abstract}

The general operadic approach to splitting algebraic operations was developed in \cite{BBGN}. By splitting the product in a given algebraic variety $\mathcal{C}$, notion of $\mathcal{C}$-dendriform algebras was systematically studied in \cite{OPV}. This article aims to study ``anti-associative dendriform  algebras", which offer an approach to addressing anti-associativity. These algebras are defined by two operations whose sum is anti-associative. Furthermore, the notion of $\mathcal{O}$-operators on
anti-associative algebras is presented as a tool to interpret anti-associative dendriform algebras. Moreover, anti-associative algebras with
nondegenerate Connes cocycles admit compatible anti-associative dendriform algebra structures.
\end{abstract}

\subjclass[2000]{17A01, 17C50.}

\keywords{Lie algebra, pre-Lie algebra, dendriform algebra, Jacobi-Jordan algebra, pre-Jacobi-Jordan algebra,  $\mathcal O$-operator}

\maketitle


\baselineskip=15.8pt

\section{Motivation}

Lie algebras were introduced by Sophus Lie in the 1870s to formalize the study of infinitesimal transformations. Since then, they have become fundamental tools in a wide range of areas, including representation theory, differential geometry, algebraic geometry, number theory, quantum mechanics, and gauge theory.

A vector space $A$ with a bilinear operation $\ast$ is called a Lie admissible algebra if $(A, [-,-])$ is a Lie algebra, where $[-,-]$ is the commutator. In particular, an
{\it associative algebra} $(A, \cdot)$ which is a vector space $A$ with a bilinear operation $\cdot$  satisfiying $x\cdot(y\cdot z)=(x\cdot y)\cdot z$ is a Lie admissible algebra. Hence, there exist a functor from the category $\mathrm{\textbf{As}}$ of associative algebras to the category $\mathrm{\textbf{Lie}}$ of Lie algebras, i.e. 
\[
\mathrm{\textbf{As}}\rightarrow \mathrm{\textbf{Lie}}.
\]
Furthermore, pre-Lie algebras, also known by various names such as left-symmetric algebras, quasi-associative algebras, or Vinberg algebras, are also Lie-admissible algebras. These algebras were first introduced by A. Cayley in 1896 as a type of rooted tree algebra. Hence, one can define a functor from the category $\mathrm{\textbf{Pre-Lie}}$ of all pre-Lie algebras to the category $\mathrm{\textbf{Lie}}$ of all Lie algebras. Thus, one has the following diagram
\begin{equation}\label{half-CD}
\begin{matrix} {}
&\stackrel{}{} & \mathrm{\textbf{Pre-Lie}} \cr {}
&&\downarrow  \cr
 \mathrm{\textbf{As}} &\stackrel{}{\rightarrow}
& \mathrm{\textbf{Lie}}\cr
\end{matrix}\end{equation}

\textbf{Question.} Is it possible to extend the diagram \eqref{half-CD} to obtain a commutative diagram?

To give an affirmative answer one should put the category $\mathrm{\textbf{Dend}}$ of all dendriform algebras on the left-top side (see \cite{A,C,Ron}) i.e.
\begin{equation*}
\begin{tikzcd}
	{\mathrm{\textbf{Dend}}} && {\mathrm{\textbf{Pre-Lie}}} \\
	\\
	{\mathrm{\textbf{As}}} && {\mathrm{\textbf{Lie}}}
	\arrow[ from=1-1, to=1-3]
	\arrow[ from=1-1, to=3-1]
	\arrow[from=1-3, to=3-3]
	\arrow[from=3-1, to=3-3]
\end{tikzcd}
\end{equation*}

The notion of dendriform algebras was introduced by Loday \cite{L1}
in 1995 with the motivation from algebraic $K$-theory and has been
studied quite extensively with connections to several areas in
mathematics and physics, including operads, homology, Hopf algebras,
Lie and Leibniz algebras, combinatorics, arithmetic and quantum
field theory and so on (see \cite{EMP} and the references therein).

Recall that a
{\it dendriform algebra} $(A, \prec, \succ)$ is a vector space $A$
with two binary operations denoted by $\prec$ and $\succ $
satisfying (for any $x,y,z\in A$)
\begin{equation*}(x\prec y)\prec z=x\prec (y\cdot z),\;\;(x\succ y)\prec
z=x\succ (y\prec z),\;\;x\succ (y\succ z)=(x\cdot y)\succ
z,\end{equation*} where $x\cdot y=x\prec y+x\succ y$. Note that $(A,\cdot)$ is
an associative algebra as a direct consequence and the notion of dendriform algebras can be introduced as an approach of ``splitting the associativity".

The central idea of the paper \cite{OPV} was to generalize dendriform algebras by requiring that the operations 
$\prec$ and $\succ$ induce on 
$A\times A$ an algebra structure lying in a chosen category $\mathcal{C}$. The authors introduced the notion of $\mathcal{C}$-dendriform algebras which can be applied to any variety $\mathcal{C}$. This provides a unified framework in which usual dendriform algebras of of J.L-Loday appear as a special case when $\mathcal{C}$ is the category of associative algebras.

The goal of this paper is to study $\mathcal{C}$-dendriform algebras in the case $\mathcal{C}$ is the category of anti-associative algebras i.e. anti-associative dendriform algebras. This characterization parallels that of dendriform algebras.

An anti-associative algebra $(A, \ast)$ is a nonassociative algebra whose multiplication satisfies
the identity 
\[
a\ast (b\ast c)+(a\ast b)\ast c=0, \quad \forall a,b,c\in A.
\]
On the one hand, if we consider the commutator, then the resulting algebra will not be a Lie algebra. But considering the anti-commutator $[a,b]=a\ast b+b\ast a$ derives Jacobi-Jordan algebras (see \cite{Remm}), which seem to be
similar to Lie algebras at first sight, but these algebras are quite different \cite{BF}. This class of algebras appeared in the literature under different
names from different viewpoints, for example ``Lie-Jordan algebras" \cite{KO}, ``mock-Lie algebras" \cite{Zu}. Thus, one has the functor from the category $\mathrm{\textbf{AAs}}$ of anti-associative algebras to the category $\mathrm{\textbf{J-J}}$ of Jacobi-Jordan algebras i.e.
\[
\mathrm{\textbf{AAs}}\rightarrow \mathrm{\textbf{J-J}}
\]
On the other hand, similar to the construction of pre-Lie algebras, pre-Jacobi-Jordan algebras are constructed in \cite{BBMM} as a left-anti-symmetric algebra with respect to anti-associator. Hence, we have

\begin{equation}\label{half-CD2}
\begin{matrix} {}
&\stackrel{}{} & \mathrm{\textbf{Pre-J-J}} \cr {}
&&\downarrow  \cr
 \mathrm{\textbf{AAs}} &\stackrel{}{\rightarrow}
& \mathrm{\textbf{J-J}}\cr
\end{matrix}\end{equation}

It is then natural to ask whether the diagram  \eqref{half-CD2} can be extended to a commutative diagram. In this paper we give an affirmative answer to this question and the category $\mathrm{\textbf{AAs-Dend}}$ of all \textbf{anti-associative dendriform  algebras} play a left-top category in the commutative diagram i.e.
\begin{equation}\label{CD1}
\begin{tikzcd}
	{\mathrm{\textbf{AAs-Dend}}} && {\mathrm{\textbf{Pre-J-J}}} \\
	\\
	{\mathrm{\textbf{AAs}}} && {\mathrm{\textbf{J-J}}}
	\arrow[dashed, from=1-1, to=1-3]
	\arrow[dashed, from=1-1, to=3-1]
	\arrow[from=1-3, to=3-3]
	\arrow[from=3-1, to=3-3]
\end{tikzcd}
\end{equation}

The classification of any class of algebras is a fundamental but highly challenging problem. It is often one of the first issues encountered when attempting to understand the structure of a particular class of algebras. Specifically, the classification of algebras of dimension 
$n$ (up to isomorphism) within a variety defined by a given family of polynomial identities is a classic question in the theory of non-associative algebras. Numerous studies have addressed the classification of low-dimensional algebras across various varieties of both associative and non-associative algebras \cite{AANS, BM, FKK, K, KSTT, P, RRB1, RRB2}.

The paper is organized as follows. In Section 2, we recall the notion of anti-associative dendriform  algebras as an approach of splitting the anti-associativity and prove that the diagram \eqref{CD1} is commutative. The notions of $\mathcal{O}$-operators and
anti-Rota--Baxter operators on anti-associative algebras are introduced to interpret anti-associative dendriform  algebras in Section 3. In Section 4, the relationships between anti-associative dendriform  algebras and  Connes cocycles on
anti-associative algebras are given. In the last Section 5, we introduce various notions of nilpotency for anti-associative dendriform  algebras and demonstrate that these definitions are equivalent. Moreover, we give the classification of $2$-dimensional anti-associative dendriform  algebras over the field of complex numbers.

\section{anti-associative dendriform  algebras}

In this section, we prove that the diagram \eqref{CD1} is commutative. Moreover, using a bimodule structure on a vector space, we define an anti-associative algebra structure on the direct sum of the vector space with itself by using the operations of an anti-associative dendriform algebra.
 


\begin{defn}[\cite{BF}]
An algebra $(A,\diamond)$ over a field $\mathbb{F}$ is called a Jacobi–Jordan algebra if it satisfies
the following two identities
\begin{align*}
x \diamond y -y \diamond x = 0,\\
x \diamond (y \diamond z) + y \diamond (z \diamond x) + z \diamond (x \diamond y) = 0,
\end{align*}
for all $x, y, z \in A$.
\end{defn}
In other words, Jacobi–Jordan algebras are commutative algebras which satisfy the
Jacobi identity. In the case of Lie algebras, the product is not commutative but anticommutative.

Recall that $(A,\circ)$ is called a \textbf{Jacobi-Jordan-admissible
algebra} \cite{BBMM, Remm}, where $A$ is a vector space with a bilinear operation
$\circ: A\otimes A\rightarrow A$, if the bilinear operation
$[-,-]:A\otimes A\rightarrow A$ defined by
\begin{equation*} [x,y]=x\circ y+y\circ x,\;\; \forall x,y\in A,
\end{equation*}
makes $(A,[-,-])$ a Jacobi-Jordan algebra. In this case,  $(A,[-,-])$ is called the \textbf{sub-adjacent Jacobi-Jordan algebra} of 
$(A,\circ)$ and denoted by $(\mathfrak{J}(A),[-,-])$. Obviously, an anti-associative algebra is a Jacobi-Jordan-admissible algebra.

Another example of Jacobi-Jordan-admissible algebras is  pre-Jacobi-Jordan algebras.

\begin{defn}\cite{BBMM}
A pre-Jacobi-Jordan algebra is a vector space $A$ together with a bilinear multiplication $\cdot: A \otimes A \to  A$, such that for any $x,y,z\in A$, the anti-associator $AAs(x,y,z):=(x\cdot y)\cdot z+x\cdot (y\cdot z)$ is skew-symmetric in $x,y,$ i.e. $AAs(x,y,z)=-AAs(y,x,z)$ or equivalently
\begin{eqnarray*}
(x\cdot y)\cdot z+x\cdot (y\cdot z)=-(y\cdot x)\cdot z-y\cdot(x\cdot z).
\end{eqnarray*}

\end{defn}

We now give a definition analogous to that of an associative admissible algebra \cite{GLB}.

\begin{defn}
Let $A$ be a vector space with two bilinear operations
\[
\succ: A \otimes A \rightarrow A, \quad  \prec: A \otimes A \rightarrow A.
\]
Define a bilinear operation $\ast$ as
\begin{equation}\label{cdot}
x \ast y = x \succ y + x \prec y, \quad \forall x, y \in A.
\end{equation}
The triple $(A, \succ, \prec)$ is called an anti-associative admissible algebra if $(A, \ast)$ is an anti-associative algebra.
In this case, $(A, \ast)$ is called the associated anti-associative algebra of $(A, \succ, \prec$).
\end{defn}

\begin{remark}
The triple $(A, \succ, \prec)$ is an anti-associative admissible algebra if and only if the following
equation holds:
\begin{equation}\label{anti-associative admissible}
\begin{split}
&(x \succ y) \succ z + (x \prec y) \succ z + (x \succ y) \prec z + (x \prec y) \prec z\\
&= -x \succ (y \succ z) - x \succ (y \prec z) - x \prec (y \succ z) - x \prec (y \prec z), \quad \forall x, y, z \in A.   
\end{split}
\end{equation}

\end{remark}

\begin{defn}\label{defi:anti-associative dendriform  algebras}
Let $A$ be a vector space with two bilinear operations $\succ$ and $\prec$. The triple
$(A, \succ, \prec)$ is called a \textbf{anti-associative dendriform  algebra} if the following equations hold:
\begin{align}\label{def:id1}
(x\ast y)\succ z&=-x\succ(y\succ z),\\\label{def:id2}
x\prec(y\ast z)&=-(x\prec y)\prec z,\\\label{def:id3}
x\succ(y\prec z)&=-(x\succ y)\prec z, \quad \forall x,y,z\in A,
\end{align}
where the bilinear operation $\ast$ is defined by Eq. \eqref{cdot}.
\end{defn}

\begin{exam}
Let $(A, \prec, \succ)$ be a one-dimensional anti-associative dendriform algebra with basis element $e$.
Assume that $e \succ e = \alpha e$ and
$e \prec e = \beta e$, where $\alpha, \beta \in  \mathbb{F}$.
Then, by equations \eqref{def:id1} -- \eqref{def:id3}, we obtain
$\alpha(2\alpha + \beta) = 0$ and $\beta(2\beta + \alpha) = 0$. Hence $\alpha = \beta = 0$, and every one-
dimensional anti-associative dendriform algebra is trivial.
\end{exam}

\begin{theorem}\label{property-1}
Let $(A, \succ, \prec)$ be an anti-associative dendriform  algebra. Then the followings hold.

(i). Define a bilinear operation $\ast$ by Eq.~\eqref{cdot}. Then $(A,\ast)$ is an anti-associative algebra,
 called the {\bf associated anti-associative algebra} of $(A, \succ, \prec).$
 Furthermore, $(A, \succ, \prec)$ is called a {\bf compatible anti-associative dendriform  algebra structure} on $(A,\ast).$

(ii). The bilinear operation $\circ:A\otimes A\rightarrow A$ given by \begin{equation*}
x\circ y=x\succ y+y\prec x,\ \ \forall x,y\in A,
\end{equation*}
  defines a pre-Jacobi-Jordan algebra, called the {\bf associated pre-Jacobi-Jordan algebra} of $(A, \succ, \prec).$
 
(iii). Both $(A, \ast)$ and $(A, \circ)$ have the same sub-adjacent Jacobi-Jordan algebra $(\mathfrak{g}(A),[-,-])$ defined by
\begin{equation*}[x, y]=x\succ y+x\prec
y+y\succ x+y\prec x,\quad \forall x,y\in
A.\end{equation*}
Moreover, the commutative diagram (2) holds.
\end{theorem}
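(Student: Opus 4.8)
The plan is to verify the three assertions by direct computation from the defining identities \eqref{def:id1}--\eqref{def:id3}, exploiting the fact that these identities were engineered precisely so that each of them, after summing with its ``partner,'' collapses to anti-associativity. For part (i), I would add together Eqs.~\eqref{def:id1}, \eqref{def:id2} and \eqref{def:id3} (the last one used twice, once as written and once with the roles arranged to produce $(x\succ y)\prec z$ and $x\prec(y\succ z)$ terms), and observe that the left-hand sides assemble into $(x\ast y)\ast z$ while the right-hand sides assemble into $-x\ast(y\ast z)$; this is exactly Eq.~\eqref{anti-associative admissible}, so $(A,\ast)$ is anti-associative. Concretely, $(x\ast y)\succ z + (x\ast y)\prec z = (x\ast y)\ast z$, and \eqref{def:id1} gives $(x\ast y)\succ z = -x\succ(y\succ z)$, \eqref{def:id3} gives $(x\ast y)\prec z = (x\succ y)\prec z + (x\prec y)\prec z = -x\succ(y\prec z) + (x\prec y)\prec z$, and \eqref{def:id2} gives $(x\prec y)\prec z = -x\prec(y\ast z) = -x\prec(y\succ z) - x\prec(y\prec z)$; summing yields $(x\ast y)\ast z = -x\succ(y\succ z) - x\succ(y\prec z) - x\prec(y\succ z) - x\prec(y\prec z) = -x\ast(y\ast z)$.

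For part (ii), with $x\circ y = x\succ y + y\prec x$, I would expand the pre-Jacobi-Jordan identity $(x\circ y)\circ z + x\circ(y\circ z) + (y\circ x)\circ z + y\circ(x\circ z)$ into its eight two-operation summands and regroup. The terms of the form $(\cdot\succ\cdot)\succ z$ and $z\prec(\cdot\succ\cdot)$ etc. should pair off so that \eqref{def:id1} handles the pieces where $z$ sits on the far right inside a $\succ$, \eqref{def:id2} handles the pieces where the leftmost variable sits on the far left inside a $\prec$, and \eqref{def:id3} handles the mixed cross terms; the symmetrization in $x,y$ built into the identity is what makes the leftover terms cancel in pairs. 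This is the most calculation-heavy step, but it is purely mechanical bookkeeping once the substitution $x\circ y = x\succ y+y\prec x$ is made and Eq.~\eqref{cdot} is used to recognize $\ast$ inside the nested expressions.

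For part (iii), I would first check that the commutator of $(A,\ast)$, namely $[x,y]=x\ast y+y\ast x$, equals the commutator of $(A,\circ)$, namely $x\circ y + y\circ x = (x\succ y+y\prec x)+(y\succ x+x\prec y) = x\succ y+x\prec y+y\succ x+y\prec x = x\ast y + y\ast x$, so both sub-adjacent structures coincide and are given by the displayed formula; that $(\mathfrak g(A),[-,-])$ is a Jacobi-Jordan algebra then follows either from part (i) via the functor $\mathrm{\textbf{AAs}}\to\mathrm{\textbf{J-J}}$ or from part (ii) via $\mathrm{\textbf{Pre-J-J}}\to\mathrm{\textbf{J-J}}$. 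Finally, commutativity of diagram \eqref{CD1} amounts to exactly these compatibilities: the left arrow is $(A,\succ,\prec)\mapsto(A,\ast)$ of part (i), the top arrow is $(A,\succ,\prec)\mapsto(A,\circ)$ of part (ii), the bottom and right arrows are the known functors to $\mathrm{\textbf{J-J}}$, and the two composites agree because both land on $(\mathfrak g(A),[-,-])$ by the computation just given. The only genuine obstacle is the eight-term expansion in part (ii); everything else is a short rearrangement.
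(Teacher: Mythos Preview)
Your proposal is correct and follows essentially the same route as the paper: part (i) is obtained by summing the three defining identities to recover Eq.~\eqref{anti-associative admissible}, part (ii) is the direct expansion of the pre-Jacobi-Jordan expression in terms of $\succ$ and $\prec$ followed by repeated application of \eqref{def:id1}--\eqref{def:id3}, and part (iii) is the one-line check that $x\circ y+y\circ x=x\ast y+y\ast x$. The only minor slip is your count of ``eight two-operation summands'' in part (ii): since $x\circ y$ has two terms and you are composing twice, each of the four expressions contributes four terms, for sixteen in total; this does not affect the argument.
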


\begin{proof}

(i) is straightforward.

(ii) Let $x, y, z \in A$. Then we have
\begin{equation*}
\begin{split}
(x \circ y) \circ z=&(x\succ y+y\prec x)\succ z+z\prec (x\succ y+y\prec x)\\
=&(x\succ y)\succ z+(y\prec x)\succ z+z\prec (x\succ y)+z\prec (y\prec x),\\
x \circ (y \circ z)=&x\succ(y\succ z+z\prec y)+(y\succ z+z\prec y)\prec x\\
=&x\succ(y\succ z)+x\succ(z\prec y)+(y\succ z)\prec x+(z\prec y)\prec x.
\end{split}
\end{equation*}
By swapping $x$ and $y$, we have
\begin{equation*}
\begin{split}
(y \circ x) \circ z=(y\succ x)\succ z+(x\prec y)\succ z+z\prec (y\succ x)+z\prec (x\prec y),\\
y \circ (x \circ z)=y\succ(x\succ z)+y\succ(z\prec x)+(x\succ z)\prec y+(z\prec x)\prec y.
\end{split}
\end{equation*}
Using Eqs.~\eqref{def:id1}--\eqref{def:id3}, we obtain
\begin{equation*}
\begin{split}
x\circ(y\circ z)+y\circ(x\circ z)=&x\succ(y\succ z)+x\succ(z\prec y)+(y\succ z)\prec x+(z\prec y)\prec x\\
&+y\succ(x\succ z)+y\succ(z\prec x)+(x\succ z)\prec y+(z\prec x)\prec y\\
=&x\succ(y\succ z)+(z\prec y)\prec x+y\succ(x\succ z)+(z\prec x)\prec y\\
=&-(x\succ y+x\prec y)\succ  z-z\prec(y\succ x+y\prec x)\\
&-(y\succ x+y\prec x)\succ z-z\prec(x\succ y+x\prec y)\\
=&-(y\circ x)\circ z-(x\circ y)\circ z.
\end{split}
\end{equation*}
Moreover, we have
\[
x\circ y+y\circ x=x \succ y + y\prec x+ x\prec y + y\succ x  = x \ast y + y \ast x, \quad \forall  x, y \in A.
\]
Thus $(A,\circ)$ is a Jacobi-Jordan-admissible algebra and hence a pre-Jacobi-Jordan algebra.

(iii). It is straightforward. 
Note that it also appears in the proof of
(ii).
\end{proof}

Let $(A,\ast)$ be an anti-associative algebra. Recall that a {\bf bimodule} of $(A,\ast)$ is a triple $(V, l, r)$ consisting of a vector space $V$ and linear maps
$l,r: A\rightarrow \rm{End}_\mathbb{F}(V)$ such that
$$l(x\ast y)v=-l(x)(l(y)v), \ \ r(x\ast y)v=-r(y)(r(x)v), \ \ l(x)(r(y)v)=-r(y)(l(x)v),\ \ \forall x,y\in A, v\in V.$$
In particular, $(A, L_\ast, R_\ast)$ is a bimodule of $(A,\ast)$, where $L_\ast,R_\ast:A\rightarrow {\rm End}_\mathbb{F}(A)$ are two linear maps defined by
$L_\ast(x)(y)=R_\ast(y)(x)=x\ast y$ for all $x,y\in A$ respectively.

Suppose that $(A,\ast)$ is an anti-associative algebra. Let $V$ be a vector space and $l,r: A\rightarrow {\rm End}_{\mathbb F}(V)$ be linear maps.
Then $(V, l, r)$ is a bimodule of $(A,\ast)$ if and only if there is an anti-associative algebra structure on the direct sum $A\oplus V$ of vector spaces  with the following bilinear operation, still denoted by~$\ast$:
$$(x,u)\ast(y,v)=(x\ast y,l(x)v+r(y)u),\quad \forall x,y\in A,u,v\in V.$$
We denote this anti-associative algebra by $A\ltimes_{l,r}V$.

\begin{theorem}\label{anti-associative dendriform  algebra equivalent}
Let $A$ be a vector space with two bilinear operations $\succ$ and $\prec$. Define a bilinear operation $\ast$ by Eq.~\eqref{cdot}.
Then $(A, \succ, \prec)$ is an anti-associative dendriform  algebra if and only if $(A,\ast)$ is an anti-associative algebra and $(A, L_\succ, R_\prec)$ is a bimodule of $(A,\ast)$, where
two linear maps $L_\succ,R_\prec:A\rightarrow {\rm End}_\mathbb{F}(A)$ are defined by
$$L_\succ(x)(y)=x\succ y,\quad R_\prec(x)(y)=y\prec x,\quad \forall x,y\in A.$$

\end{theorem}

\begin{proof}

From Eqs.~\eqref{anti-associative admissible},~\eqref{def:id1},~\eqref{def:id2} and ~\eqref{def:id3} it follows that $(A, \succ, \prec)$ is an anti-associative dendriform  algebra if and only if $(A, \succ, \prec)$ is an anti-associative admissible algebra, that is, $(A, \ast)$ is an anti-associative algebra, and for all $x, y, z \in A$, the following equations hold:
  \begin{equation*}
 x\succ(y\succ z)=-(x\cdot y)\succ z,\ (x\prec y)\prec z=-x\prec(y\cdot z),\
  (x\succ y)\prec z=-x\succ (y\prec z).
  \end{equation*}

Let $x,y,z\in A$. Then we have
\begin{align*}
   L_\succ(x\ast y)(z)=-L_\succ(x)(L_\succ(y)z)\ \ &\Longleftrightarrow \ \ x\succ (y\succ z)=-(x\ast y)\succ z, \\
  R_\prec(x\ast y)(z)=-R_\prec(y)(R_\prec(x)z)\ \ &\Longleftrightarrow \ \ (z\prec x)\prec y=-z\prec(x\ast y), \\
  L_\succ(x)(R_\prec(y)z)=-R_\prec(y))(L_\succ(x)z) \ \ &\Longleftrightarrow \ \ x\succ(z\prec y)=-(x\succ z)\prec y.
\end{align*}
Hence $(A, L_\succ, R_\prec)$ is a bimodule.
\end{proof}

Let $V$ be a vector space and $V^*$ be a dual space of $V$.
For $\phi\in V^*$ and $u\in V$, we write $\langle \phi, u\rangle:=\phi(u)$.

 The following conclusion is obvious.
\begin{lemma}\label{lem2.9} Let $V$ be a vector space and $V^*$ be a dual space of $V$.
Let $(V,l, r)$ be a bimodule of an anti-associative algebra $A$. 
\begin{itemize}
\item[(i)] Let $l^{*}, r^{*}: A \rightarrow \mathfrak{gl}(V^{*})$ be the linear maps given by
\[
\langle l^{*}(x)u^{*}, v\rangle = \langle u^{*}, l(x)v \rangle, \ \langle r^{*}(x)u^{*},v\rangle = \langle u^{*}, r(x)v\rangle, \quad  \forall x \in A, u^{*} \in V^{*}, v \in V.
\]
Then $(V^{*},r^{*}, l^{*}) $ is a bimodule of A.

\item[(ii)]  $(V, l,0), (V, 0,r), (V^{*},r^{*}, 0)$ and $(V^{*}, 0,l^{*})$ are bimodules of $A$.
\end{itemize}
\end{lemma}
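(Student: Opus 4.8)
The plan is to verify each claim by direct computation, using only the defining identities of a bimodule together with the duality pairing. Part (i) is the heart of the matter: I need to check the three bimodule axioms for the triple $(V^{*}, r^{*}, l^{*})$, namely that for all $x,y\in A$ and $u^{*}\in V^{*}$,
\[
r^{*}(x\ast y)u^{*} = -r^{*}(x)\bigl(r^{*}(y)u^{*}\bigr),\quad
l^{*}(x\ast y)u^{*} = -l^{*}(y)\bigl(l^{*}(x)u^{*}\bigr),\quad
r^{*}(x)\bigl(l^{*}(y)u^{*}\bigr) = -l^{*}(y)\bigl(r^{*}(x)u^{*}\bigr).
\]
Each of these is proved by pairing both sides against an arbitrary $v\in V$ and pushing the operators across the pairing via the definition $\langle l^{*}(x)u^{*},v\rangle=\langle u^{*},l(x)v\rangle$ and $\langle r^{*}(x)u^{*},v\rangle=\langle u^{*},r(x)v\rangle$. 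For instance, the first identity becomes, after unwinding, $\langle u^{*}, r(x\ast y)v\rangle = -\langle u^{*}, r(x)(r(y)v)\rangle$ — wait, one must be careful with the order: the bimodule axiom for $(V,l,r)$ reads $r(x\ast y)v = -r(y)(r(x)v)$, so pairing $r^{*}(x\ast y)u^{*}$ against $v$ gives $\langle u^{*}, r(x\ast y)v\rangle = -\langle u^{*}, r(y)(r(x)v)\rangle$, and the right-hand side $r^{*}(x)(r^{*}(y)u^{*})$ pairs to $\langle r^{*}(y)u^{*}, r(x)v\rangle = \langle u^{*}, r(y)(r(x)v)\rangle$; hence the two sides agree up to the sign, which matches. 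The same bookkeeping handles the other two axioms, the mixed one following from $l(x)(r(y)v) = -r(y)(l(x)v)$.

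The only genuine subtlety — and the step I expect to require the most care — is tracking the \emph{swap of the two arguments} in the first two bimodule axioms when dualizing: the left axiom for $(V,l,r)$ has $l$ composed in the order $l(x)l(y)$ while the right axiom has $r$ composed in the reversed order $r(y)r(x)$, and this is precisely why in the dual the roles of $l^{*}$ and $r^{*}$ are interchanged rather than kept in place. Concretely, to verify $l^{*}(x\ast y)u^{*} = -l^{*}(y)(l^{*}(x)u^{*})$ one pairs against $v$ to get $\langle u^{*}, l(x\ast y)v\rangle$ on the left; but the original left axiom gives $l(x\ast y)v = -l(x)(l(y)v)$, while the right-hand side dualizes to $\langle l^{*}(x)u^{*}, l(y)v\rangle = \langle u^{*}, l(x)(l(y)v)\rangle$, so again the signs match after the interchange. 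One simply has to be disciplined about which of $x,y$ sits inside and which outside at each stage.

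For part (ii), the four assertions are immediate specializations: $(V,l,0)$ and $(V,0,r)$ satisfy the bimodule axioms trivially because the absent operator kills two of the three identities and the third reduces to the corresponding half of the original axiom (the mixed axiom holds because one side is zero, and one half of each of the first two axioms is the original relation while the other half is $0=0$). Then $(V^{*},r^{*},0)$ and $(V^{*},0,l^{*})$ are obtained by combining part (i) with the first two cases, or equivalently by applying the construction $(V,l,0)\mapsto(V^{*},0,l^{*})$ and $(V,0,r)\mapsto(V^{*},r^{*},0)$ directly. No obstacle arises here beyond the observation already made.
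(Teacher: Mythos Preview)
Your proposal is correct and follows essentially the same approach as the paper's proof: for part (i) you pair both sides against an arbitrary $v\in V$, push $l^{*}$ and $r^{*}$ across the pairing to reduce to the original bimodule identities for $(V,l,r)$, and observe that the order-reversal in the right-action axiom is precisely what forces the roles of $l^{*}$ and $r^{*}$ to swap; for part (ii) you observe (as the paper does, tersely) that setting one action to zero trivializes the relevant identities. Your exposition is slightly more explicit about the swap mechanism, but the argument is the same.
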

\begin{proof}
(i) \quad Let $x \in A, u^{*} \in V^{*}, v \in V$. Then
\[
\begin{split}
\langle l^{*}(x\ast y)u^{*}, v\rangle=&\langle u^{*},l(x\ast y)v\rangle=-\langle u^{*}, l(x)(l(y)v) \rangle=-\langle l^{*}(x)u^{*},l(y)v\rangle=-\langle l^{*}(y)(l^{*}(x)u^{*}),v\rangle,\\
\langle r^{*}(x\ast y)u^{*}, v\rangle=&\langle u^{*},r(x\ast y)v\rangle=-\langle u^{*}, r(y)(r(x)v) \rangle=-\langle r^{*}(y)u^{*},r(x)v\rangle=-\langle r^{*}(x)(r^{*}(y)u^{*}),v\rangle,
\end{split}
\]
and
\[
\begin{split}
\langle (l^{*}(x)r^{*}(y))u^{*}, v\rangle=&\langle (l^{*}(x)(r^{*}(y)u^{*}), v\rangle=\langle r^{*}(y)u^{*},l(x)v\rangle=\langle u^{*},r(y)(l(x)v)\rangle\\
=&-\langle u^{*}, l(x)(r(y)v) \rangle=-\langle l^{*}(x)u^{*},r(y)v\rangle=-\langle r^{*}(y)(l^{*}(x)u^{*}),v\rangle,\\
\end{split}
\]
Hence, $(V^{*}, r^{*}, l^{*})$ is a bimodule of $A$.

(ii) is obvious.
\end{proof}

\begin{coro}\label{cor:anti-d} Let $A$ be a vector space with two bilinear operations $\succ,\prec: A\otimes A\rightarrow A$.
Then on the direct sum $\hat A:=A\oplus A$ of vector spaces,
the following bilinear operation
\begin{equation}\label{eq:asso-double}
(x,a)\ast (y,b)=(x\succ y+x\prec y,
x\succ b+a\prec y),\;\;\forall x,y,a,b\in
A,
\end{equation}
makes an anti-associative algebra $(\hat A,\ast)$ if and only if
$(A,\succ,\prec)$ is an anti-associative dendriform  algebra.
\end{coro}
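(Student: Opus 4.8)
The plan is to recognize the bracket \eqref{eq:asso-double} as the semidirect-product (split null extension) anti-associative structure $A\ltimes_{L_\succ,R_\prec}A$ attached to the candidate bimodule $(A,L_\succ,R_\prec)$, and then to reduce everything to Theorem~\ref{rhizaform algebra equivalent} together with the general fact recalled just before it: for an anti-associative algebra $(A,\ast)$, a triple $(V,l,r)$ is a bimodule if and only if $A\ltimes_{l,r}V$ carries an anti-associative algebra structure.

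First I would make the identification explicit. Taking $V=A$, $l=L_\succ$ and $r=R_\prec$, so that $l(x)b=x\succ b$ and $r(y)a=a\prec y$, and recalling $x\ast y=x\succ y+x\prec y$ from \eqref{cdot}, the semidirect-product formula $(x,a)\ast(y,b)=(x\ast y,\ l(x)b+r(y)a)$ becomes literally $(x\succ y+x\prec y,\ x\succ b+a\prec y)$, which is \eqref{eq:asso-double}. Hence $(\hat A,\ast)=A\ltimes_{L_\succ,R_\prec}A$ whenever the latter is defined, i.e. whenever $(A,\ast)$ is anti-associative.

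For the ($\Leftarrow$) direction, assume $(A,\succ,\prec)$ is a rhizaform algebra. By Theorem~\ref{rhizaform algebra equivalent}, $(A,\ast)$ is anti-associative and $(A,L_\succ,R_\prec)$ is a bimodule of $(A,\ast)$; hence $A\ltimes_{L_\succ,R_\prec}A$ is an anti-associative algebra, and by the previous paragraph this algebra is exactly $(\hat A,\ast)$. For the ($\Rightarrow$) direction, assume $(\hat A,\ast)$ is anti-associative. From \eqref{eq:asso-double} one has $(x,0)\ast(y,0)=(x\succ y+x\prec y,\,0)=(x\ast y,0)$, so $x\mapsto (x,0)$ is an injective algebra homomorphism identifying $(A,\ast)$ with a subalgebra of $(\hat A,\ast)$; therefore $(A,\ast)$ is itself anti-associative. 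Now the standing hypothesis of the general fact is met, and since $(\hat A,\ast)=A\ltimes_{L_\succ,R_\prec}A$ is anti-associative, $(A,L_\succ,R_\prec)$ is a bimodule of $(A,\ast)$. Applying Theorem~\ref{rhizaform algebra equivalent} once more yields that $(A,\succ,\prec)$ is a rhizaform algebra.

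I do not expect a genuine obstacle here: the whole content is the identification of \eqref{eq:asso-double} with a split null extension. The one point needing a little care is that the equivalence ``$(V,l,r)$ is a bimodule $\iff$ $A\ltimes_{l,r}V$ is anti-associative'' is stated under the standing assumption that $(A,\ast)$ is anti-associative, so in the ($\Rightarrow$) direction one must first extract the anti-associativity of $(A,\ast)$ from that of $(\hat A,\ast)$; this is immediate from the subalgebra embedding $x\mapsto(x,0)$ above. If one preferred a self-contained computation instead of citing Theorem~\ref{rhizaform algebra equivalent}, one could expand the anti-associator of \eqref{eq:asso-double} componentwise: the first component reproduces \eqref{anti-associative admissible} (equivalently, anti-associativity of $(A,\ast)$), and the second component, upon setting the appropriate arguments of $a,b$ to zero and using bilinearity, separates into precisely the three defining identities \eqref{def:id1}–\eqref{def:id3}; but the structural argument via the split null extension is shorter and cleaner.
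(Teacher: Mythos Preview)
Your proposal is correct and follows essentially the same route as the paper: identify \eqref{eq:asso-double} as the split null extension $A\ltimes_{L_\succ,R_\prec}A$, use the equivalence between bimodules and anti-associativity of the semidirect product, and conclude via Theorem~\ref{rhizaform algebra equivalent}. If anything, you are more careful than the paper in the ($\Rightarrow$) direction, explicitly extracting the anti-associativity of $(A,\ast)$ via the embedding $x\mapsto(x,0)$ before invoking the bimodule criterion; the paper's proof simply asserts the full equivalence in one sentence.
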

\begin{proof}

The first proof: It is clear that $(\hat A,\ast)$ is an anti-associative algebra if and only if $(A, \succ, \prec)$ is an anti-associative admissible algebra,
that is, $(A,\ast)$ is an associative algebra,
and $(A, L_\succ, R_\prec)$ is a bimodule
of the associated anti-associative algebra, which is equivalent to the
fact that $(A, \succ, \prec)$ is a
anti-associative dendriform  algebra by Theorem \ref{anti-associative dendriform  algebra equivalent}.

The second proof: By the definition of $\mathcal{C}$-dendriform algebras in the case $\mathcal{C}$ is an anti-associative algebra (see \cite{OPV}).
\end{proof}

\section{anti-associative dendriform  algebras and $\mathcal{O}$-operators}

M. Aguiar in 2000 \cite{A} was the first who noticed a relation between Rota--Baxter
algebras and dendriform algebras. The concept of an $\mathcal{O}$-operator, which generalizes the Rota--Baxter operator, was introduced by C. Bai, L. Guo, and K. Ni in 2010 [4]. They established that any dendriform algebra arises explicitly from an algebra admitting an $\mathcal{O}$-operator. By the motivation of their work, V. Yu. Gubarev and P. S. Kolesnikov proved that every dendriform dialgebra in $\operatorname{Var}$ can be embedded into a
Rota--Baxter algebra of weight zero in the same variety \cite{GP}. 

Similarly to the above works, the authors of \cite{OPV} proved the following Proposition.

\begin{prop}\label{Rota-Baxter on C-dendriform}
Let $R$ be a Rota--Baxter operator on an algebra $(A, \mu)$
which belongs to $\mathcal{C}$. For $a, b \in A$, let $a \succ b := R(a)b$ and $a \prec b := aR(b)$.
Then $(A, \succ, \prec)$ is a $\mathcal{C}$-dendriform algebra.
\end{prop}

In this section, for the case where $\mathcal{C}$ is an anti-associative algebra, we prove that the result holds more generally for $\mathcal{O}$-operators , which we substitute for the Rota--Baxter operators in the construction.

\begin{defn}
Let $(A,\ast)$ be an anti-associative algebra and $(V, l, r)$ be a
bimodule. A linear map $T: V\rightarrow A$ is called an {\bf
$\mathcal{O}$-operator} of $(A,\ast)$ associated to $(V, l,
r)$ if the following equation holds: \begin{equation*}
T(u)\ast
T(v)=T\big(l(T(u))v+r(T(v))u\big), \quad \forall u,v\in
V.
\end{equation*}  
In particular, an
$\mathcal{O}$-operator $T$ of $(A,\ast)$ associated to the
bimodule $(A,L_\ast,R_\ast)$ is called an {\bf Rota--Baxter
operator}, that is, $T:A\rightarrow A$ is a linear map satisfying
\begin{equation}\label{eq:ANTIRBO}T(x)\ast T(y)=T\big(T(x)\ast y+x\ast T(y)\big), \quad
\forall x,y\in A.\end{equation} In this case, we call $(A,T)$ a {\bf
Rota--Baxter algebra} .
\end{defn}

\begin{theorem}\label{anti}
Let $(A,\ast)$ be an anti-associative algebra and $(V,l,r)$ be a
bimodule.
 Suppose that $T: V\rightarrow A$ is an $\mathcal{O}$-operator of $(A,\ast)$ associated to $(V, l,
 r)$.
  Define two bilinear operations $\succ,\prec$ on $V$  respectively as
  \begin{align}\label{anti-O1}
  u\succ v=l(T(u))v,\quad u\prec v=r(T(v))u,\quad \forall u,v\in V.
  \end{align}
  Then  $(V, \succ, \prec)$ is an
anti-associative dendriform  algebra. In this
case, $T$ is a homomorphism of anti-associative algebras from the
associated anti-associative algebra $(V,\ast)$ to $(A,\ast)$.
Furthermore, there is an induced anti-associative dendriform  algebra structure
on $T(V)=\{T(u)~|~u\in V\}\subseteq A$ given by
\begin{align}\label{induce}
T(u)\succ T(v)=T(u\succ v),\quad
T(u)\prec T(v)=T(u\prec v),\quad \forall
u,v\in V,
\end{align}
and $T$ is a homomorphism of anti-associative dendriform  algebras.

\end{theorem}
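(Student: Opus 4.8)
The plan is to reduce the first assertion to Theorem~\ref{rhizaform algebra equivalent} and then read off the remaining claims with almost no extra work. First I would record the elementary consequences of \eqref{anti-O1}: one has $L_\succ(u) = l(T(u))$ and $R_\prec(u) = r(T(u))$, and the bilinear operation $\ast$ on $V$ produced by \eqref{cdot} is $u \ast v = l(T(u))v + r(T(v))u$. Substituting this into the $\mathcal{O}$-operator equation gives $T(u\ast v) = T(u)\ast T(v)$ for all $u,v\in V$; this identity is exactly the statement that $T\colon(V,\ast)\to(A,\ast)$ is a homomorphism of anti-associative algebras once $(V,\ast)$ is known to be anti-associative, so it simultaneously disposes of the homomorphism claim.

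Next I would verify that $(V,\ast)$ is anti-associative. Expanding $u\ast(v\ast w) + (u\ast v)\ast w$ with $u\ast v = l(T(u))v + r(T(v))u$, and rewriting $T(v\ast w) = T(v)\ast T(w)$ and $T(u\ast v) = T(u)\ast T(v)$, the six resulting terms fall into three pairs: the two terms in $w$ cancel by the axiom $l(x\ast y)=-l(x)l(y)$, the two terms in $u$ cancel by $r(x\ast y)=-r(y)r(x)$, and the two terms in $v$ cancel by $l(x)r(y)=-r(y)l(x)$, all for the bimodule $(V,l,r)$ of $(A,\ast)$. I would then check that $(V,L_\succ,R_\prec)$ is a bimodule of $(V,\ast)$: using $L_\succ(u)=l(T(u))$, $R_\prec(u)=r(T(u))$ and $T(u\ast v)=T(u)\ast T(v)$, each of the three defining equations of a bimodule translates verbatim into the corresponding bimodule axiom for $(V,l,r)$. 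By Theorem~\ref{rhizaform algebra equivalent}, $(V,\succ,\prec)$ is a rhizaform algebra whose associated anti-associative algebra is $(V,\ast)$.

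For the induced structure on $T(V)$ the real point is the well-definedness of \eqref{induce}, which amounts to showing that $\ker T$ is an ideal of the rhizaform algebra $(V,\succ,\prec)$, i.e.\ that $V\succ\ker T$, $\ker T\succ V$, $V\prec\ker T$ and $\ker T\prec V$ all lie in $\ker T$. This follows from bilinearity together with the $\mathcal{O}$-operator identity: if $T(v)=0$ then $r(T(v))=0$ and $T(u)\ast T(v)=0$, so $T(u\succ v) = T(l(T(u))v) = T\big(l(T(u))v + r(T(v))u\big) = T(u)\ast T(v) = 0$, while $u\prec v = r(T(v))u = 0$; symmetrically, if $T(u)=0$ then $u\succ v = l(T(u))v = 0$ and $T(u\prec v) = T(r(T(v))u) = T\big(l(T(u))v + r(T(v))u\big) = T(u)\ast T(v) = 0$. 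Hence \eqref{induce} descends to $V/\ker T\cong T(V)$, equipping $T(V)$ with a rhizaform structure for which the canonical projection, and therefore $T$, is a homomorphism of rhizaform algebras; one also checks $T(u)\succ T(v) + T(u)\prec T(v) = T(u\ast v) = T(u)\ast T(v)$, so the associated anti-associative algebra of $(T(V),\succ,\prec)$ is the subalgebra $T(V)$ of $(A,\ast)$.

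I expect the only step requiring genuine care to be the well-definedness of \eqref{induce} on $T(V)$ (equivalently, that $\ker T$ is a rhizaform ideal); the verifications of anti-associativity of $(V,\ast)$ and of the bimodule property of $(V,L_\succ,R_\prec)$ are direct pushforwards of the axioms of $(V,l,r)$ through $T$, so I do not anticipate any serious obstacle there.
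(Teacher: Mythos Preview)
Your argument is correct. The paper takes a slightly different route: rather than invoking Theorem~\ref{rhizaform algebra equivalent}, it verifies the rhizaform axioms \eqref{def:id1}--\eqref{def:id3} on $V$ directly, e.g.\ computing $-(u\ast v)\succ w = -l(T(u)\ast T(v))w = l(T(u))l(T(v))w = u\succ(v\succ w)$, and similarly for the other two; for the induced structure on $T(V)$ it then recomputes the same three axioms using \eqref{induce} without ever pausing to justify well-definedness. Your approach is more structural and arguably cleaner: by writing $L_\succ(u)=l(T(u))$, $R_\prec(u)=r(T(u))$ and using $T(u\ast v)=T(u)\ast T(v)$, each required identity literally pushes forward from the bimodule axioms for $(V,l,r)$, so the appeal to Theorem~\ref{rhizaform algebra equivalent} packages the three checks into one. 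You also explicitly treat the well-definedness of \eqref{induce} (equivalently that $\ker T$ is a rhizaform ideal), which the paper leaves implicit; your verification via the $\mathcal{O}$-operator identity is the right one and fills a small gap in the paper's exposition.
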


\begin{proof}
Let $u,v,w\in V$. Then we have
\begin{equation*}
\begin{split}
-(u\ast v)\succ w=&-(u\succ v+u\prec v)\succ w=-l(T\big(l(T(u))v\big))w-l(T\big(r(T(v))u\big))w=-l(T(u)\ast T(v))w\\
=&l(T(u))(l(T(v))w)=u\succ(v\succ w).
\end{split}
\end{equation*}
Similarly, we have 
\[
(u\prec v)\prec w=-u\prec(v\ast w), \quad  (u\succ v)\prec w=-u\succ (v\prec w).
\]
Thus by 
Definition~\ref{defi:anti-associative dendriform  algebras}, $(V,
\succ, \prec)$ is an anti-associative dendriform  algebra. 

Moreover,
\begin{align*}
T(u\ast v)&=T(u\succ v+u\prec v)=T(u\succ v)+T(u\prec v) =T\big(l(T(u))v\big)+T\big(r(T(v))u\big)
=T(u)\ast T(v),\ \ \forall u,v\in V,
\end{align*}
shows that $T$ is a homomorphism of anti-associative algebras from the associated anti-associative algebra $(V,\ast)$ to $(A ,\ast)$

Furthermore, for any $u,v,w\in V,$ we can obtain
\begin{align*}
T(u)\succ (T(v)\succ T(w))&=T(u\succ (v\succ w))=-T(u\succ v)\succ T(w)-T(u\prec v)\succ T(w)\\
&=-(T(u)\succ T(v))\succ T(w)-(T(u)\prec T(v))\succ T(w)\\
&=-(T(u)\succ T(v)+T(u)\prec T(v))\succ T(w).
\end{align*}
Similarly, one can derive
\begin{align*}
 (T(u)\prec T(v))\prec T(w)&=-T(u)\prec (T(v)\succ T(w)+T(v)\prec T(w)),\\
 (T(u)\succ T(v))\prec T(w)&=-T(u)\succ (T(v)\prec T(w)),
\end{align*}
which imply that $T(V)$ is an anti-associative dendriform  algebra. This
completes the proof.
\end{proof}

Below, we provide an example to illustrate the construction of an anti-associative dendriform algebra using Proposition \ref{Rota-Baxter on C-dendriform}.

\begin{exam}
Let $(A,\ast)$ be a complex anti-associative  algebra with a basis $e_1,e_2$
whose non-zero products are given by
$$e_1\ast e_1=e_2.$$
Suppose that $R:A\rightarrow A$ is a linear map whose corresponding matrix
is given by $\left( \begin{matrix}\alpha_{11} &\alpha_{12}\cr
\alpha_{21}&\alpha_{22}\cr\end{matrix}\right)$ under the basis $e_1,e_2$.
Applying Eq.~(\ref{eq:ANTIRBO}) to the pair  $\{e_1,e_2\}$ yields $\alpha_{21}=0$. For the pair $\{e_1,e_1\}$, we obtain $\alpha_{11}(\alpha_{11}-2\alpha_{22})=0$. The other pairs yield no further constraints. Thus $R$ is a Rota--Baxter
operator on $(A,\ast)$ if and only if $\alpha_{11}(\alpha_{11}-2\alpha_{22})=0$. Then we have two cases:

\textbf{Case 1.} $\alpha_{11}=0$. Then the corresponding matrix of the Rota--Baxter operator is of the form $\begin{pmatrix}
    0&\alpha_{12}\\
    0&\alpha_{22}
\end{pmatrix}$. Hence, one can obtain the trivial anti-associative dendriform  algebra.

\textbf{Case 2.} $\alpha_{11}\neq0$. Then $\alpha_{11}=2\alpha_{22}$ and the corresponding matrix of the Rota--Baxter operator is of the form $\begin{pmatrix}
    2\alpha_{22}&\alpha_{12}\\
    0&\alpha_{22}
\end{pmatrix}$. Hence, we  get an anti-associative dendriform  algebra whose non-zero products are given by
\[
e_1\succ e_1=2\alpha_{22}e_2, \quad e_1\prec e_1=2\alpha_{22}e_2, \quad \alpha_{11}\in \mathbb{C}^{*}.
\]
By the basis change $e'_1=e_1, \ e'_2=2\alpha_{22}e_2$, it is isomorphic to $Rh_2[1]$ given in Theorem \ref{thm5.8}.
\end{exam}

Next we consider  invertible $\mathcal{O}$-operators.
\begin{theorem}\label{anti-associative dendriform -invertible O}
Let $(A,\ast)$ be an anti-associative algebra. Then there is a
compatible anti-associative dendriform  algebra structure on $(A,\ast)$ if and
only if there exists an invertible $\mathcal{O}$-operator of
$(A,\ast)$.
\end{theorem}

\begin{proof}
Suppose that $(A,\succ,\prec)$ is a compatible
anti-associative dendriform  algebra structure on $(A,\ast)$. Then
$$x\ast
y=x\succ y+x\prec y=L_\succ
(x) y+R_\prec (y)x,\;\;\forall x,y\in A.$$ Hence the
identity map ${\rm Id}:A\rightarrow A$ is an invertible
$\mathcal{O}$-operator of $(A,\ast)$ associated to the
bimodule $(A,L_\succ,R_\prec)$.

Conversely, suppose that $T:V\rightarrow A$ is an invertible
$\mathcal{O}$-operator of $(A,\ast)$ associated to a
bimodule $(V, l, r)$ of $(A,\ast)$. Then by  Proposition \ref{anti}, there exists
a compatible anti-associative dendriform  algebra structures on $V$ and $T(V)=A$ defined by
Eqs.~\eqref{anti-O1} and ~\eqref{induce} respectively. Let $x,y\in
A$. Then there exist $u,v\in V$ such that $x=T(u),y=T(v)$. Hence
we have
\begin{align*}
x\ast y&=T(u)\ast T(v)=T(l(T(u))v+r(T(v))u)=T(u\succ v+u\prec v)\\
&=T(u)\succ T(v)+T(u)\prec T(v)=x\succ y+x\prec y.
\end{align*}
So $(A,\succ,\prec)$ is a compatible
anti-associative dendriform  algebra structure on $(A,\ast)$.
\end{proof}

\begin{prop}\label{embedding}
Let $(A,\ast)$ be an anti-associative algebra and $(V, l, r)$ be a
bimodule. Suppose that $T:V\longrightarrow A$ is a linear map.
Then $T$ is an $\mathcal{O}$-operator of $(A,\ast)$
associated to $(V, l, r)$ if and only if the linear map
$$\hat T: A\ltimes_{l,r} V\longrightarrow A\ltimes_{l,r} V,\quad (x,u)\longmapsto
(T(u),0),$$ is a Rota--Baxter operator on the anti-associative
algebra $A\ltimes_{l,r}V$.

\end{prop}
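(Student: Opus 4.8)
The plan is to unfold the Rota--Baxter identity \eqref{eq:ANTIRBO} for $\hat T$ on the anti-associative algebra $A\ltimes_{l,r}V$ and observe that it collapses precisely to the $\mathcal{O}$-operator identity for $T$. First I would note that, by the discussion preceding Theorem~\ref{rhizaform algebra equivalent}, $A\ltimes_{l,r}V$ really is an anti-associative algebra (this is where the bimodule axioms for $(V,l,r)$ enter), so the notion of a Rota--Baxter operator on it is defined; and that the map $\hat T:(x,u)\mapsto(T(u),0)$ is evidently linear. Thus the whole content is the single identity
\[
\hat T(X)\ast \hat T(Y) = \hat T\big(\hat T(X)\ast Y + X\ast \hat T(Y)\big),\qquad X=(x,u),\ Y=(y,v)\in A\oplus V.
\]

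Next I would compute both sides using the product $(x,u)\ast(y,v)=(x\ast y,\,l(x)v+r(y)u)$ together with $\hat T(x,u)=(T(u),0)$. Since the second components of $\hat T(X)$ and $\hat T(Y)$ vanish, the left-hand side is
\[
\hat T(X)\ast\hat T(Y)=(T(u),0)\ast(T(v),0)=\big(T(u)\ast T(v),\,0\big).
\]
For the argument of $\hat T$ on the right-hand side, $\hat T(X)\ast Y=(T(u),0)\ast(y,v)=\big(T(u)\ast y,\,l(T(u))v\big)$ and $X\ast\hat T(Y)=(x,u)\ast(T(v),0)=\big(x\ast T(v),\,r(T(v))u\big)$, so their sum is $\big(T(u)\ast y+x\ast T(v),\,l(T(u))v+r(T(v))u\big)$, and applying $\hat T$ yields $\big(T(l(T(u))v+r(T(v))u),\,0\big)$. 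Hence the Rota--Baxter identity for $\hat T$ holds for all $X,Y$ if and only if
\[
T(u)\ast T(v) = T\big(l(T(u))v + r(T(v))u\big),\qquad\forall u,v\in V,
\]
which is exactly the definition of $T$ being an $\mathcal{O}$-operator associated to $(V,l,r)$. Notice that the $A$-slots $x,y$ drop out of both sides entirely, so no auxiliary condition is imposed, and the equivalence is immediate in both directions.

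The argument is entirely routine; I do not expect any genuine obstacle. The only point requiring a little care is the bookkeeping of the second components, so as to confirm that the identity for $\hat T$ really reduces to the $\mathcal{O}$-operator identity with nothing left over from the $x,y$ entries. (One may also package the computation structurally: $\hat T$ is the composite $A\oplus V\twoheadrightarrow V\xrightarrow{\,T\,}A\hookrightarrow A\oplus V$, whose image lies in $A\oplus 0$ and which annihilates $A\oplus 0$; but the direct calculation above already makes the statement transparent.)
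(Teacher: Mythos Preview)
Your proposal is correct and follows essentially the same approach as the paper: both unfold the Rota--Baxter identity for $\hat T$ on $A\ltimes_{l,r}V$, compute the three products $\hat T(X)\ast\hat T(Y)$, $\hat T(X)\ast Y$, and $X\ast\hat T(Y)$ componentwise, and observe that after applying $\hat T$ the $A$-entries $x,y$ disappear so that the identity reduces exactly to the $\mathcal{O}$-operator condition on $T$.
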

\begin{proof}
Let  $x,y\in A,u,v\in V$. Then we have
\begin{eqnarray*}
\hat T((x,u))\ast \hat T((y,v))&=&(T(u),0)\ast(T(v),0)=(T(u)\ast T(v),0),\\
\hat T((x,u))\ast(y,v)&=&(T(u),0)\ast(y,v)=(T(u)\ast y, l(T(u))v),\\
(x,u)\ast \hat T ((y,v))&=&(x,u)\ast (T(v),0)=(x\ast T(v),
r(T(v))u).
\end{eqnarray*}
Hence $\hat T$ is a Rota--Baxter operator on the anti-associative
algebra $A\ltimes_{l,r}V$ if and only if
$$(T(u)\ast T(v),0)=\bigg(T\big(l(T(u))v+r(T(v))u\big),0\bigg),$$
that is, $T$ is an $\mathcal{O}$-operator of $(A,\ast)$
associated to $(V, l, r)$.
\end{proof}

\begin{coro}
Let $(A,\succ,\prec)$ be a
anti-associative dendriform  algebra and $(A,\ast)$ be the associated
anti-associative algebra. Set $\hat A=A\oplus A$ as the direct sum of
vector spaces. Define a bilinear operation $\ast$ on $\hat A$ by
Eq.~\eqref{eq:asso-double} and a linear map $\widehat {\rm
Id}:\hat A\rightarrow \hat A$ by 
\begin{equation*}
\widehat {\rm Id}((x,y))=(y,0),\;\;\forall x,y\in A.
\end{equation*}
Then $\widehat {\rm Id}$ is a Rota--Baxter operator on the
anti-associative algebra $(\hat A,\ast)$, that is, $(\hat A, \widehat
{\rm Id})$ is a Rota--Baxter algebra.
\end{coro}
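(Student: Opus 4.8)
The plan is to deduce this immediately from Proposition~\ref{embedding}, applied to the trivial (invertible) $\mathcal O$-operator $T={\rm Id}$. First I would recall that, by Theorem~\ref{rhizaform algebra equivalent}, the triple $(A,L_\succ,R_\prec)$ is a bimodule of the associated anti-associative algebra $(A,\ast)$, where $L_\succ(x)(y)=x\succ y$ and $R_\prec(x)(y)=y\prec x$. Moreover, as already observed in the proof of Theorem~\ref{rhizaform-invertible O}, one has $x\ast y=x\succ y+x\prec y=L_\succ(x)y+R_\prec(y)x$ for all $x,y\in A$, so ${\rm Id}:A\to A$ is an $\mathcal O$-operator of $(A,\ast)$ associated to the bimodule $(A,L_\succ,R_\prec)$.

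Next I would identify the semidirect-product anti-associative algebra $A\ltimes_{L_\succ,R_\prec}A$ with $(\hat A,\ast)$ as defined by Eq.~\eqref{eq:asso-double}. Indeed, by the definition of the semidirect product,
\[
(x,a)\ast(y,b)=\big(x\ast y,\ L_\succ(x)b+R_\prec(y)a\big)=\big(x\succ y+x\prec y,\ x\succ b+a\prec y\big),
\]
which is exactly Eq.~\eqref{eq:asso-double}; in particular this recovers (via Theorem~\ref{rhizaform algebra equivalent}, or equivalently Corollary~\ref{cor:anti-d}) that $(\hat A,\ast)$ is anti-associative.

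Finally I would apply Proposition~\ref{embedding} with $(V,l,r)=(A,L_\succ,R_\prec)$ and $T={\rm Id}$: it asserts that the map $\hat T:\hat A\to\hat A$, $(x,a)\mapsto(T(a),0)=(a,0)$, is a Rota-Baxter operator on $A\ltimes_{L_\succ,R_\prec}A=(\hat A,\ast)$. Since $\hat T((x,y))=(y,0)=\widehat{\rm Id}((x,y))$, this is precisely the statement that $\widehat{\rm Id}$ is a Rota-Baxter operator on $(\hat A,\ast)$, i.e.\ that $(\hat A,\widehat{\rm Id})$ is a Rota-Baxter algebra.

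There is no genuine obstacle here beyond bookkeeping: the entire content is the identification of the two descriptions of the same anti-associative algebra structure on $A\oplus A$, and of the two linear maps $\hat T$ and $\widehat{\rm Id}$, after which Proposition~\ref{embedding} applies verbatim. If a self-contained argument were preferred, one could instead verify Eq.~\eqref{eq:ANTIRBO} for $\widehat{\rm Id}$ by a direct computation expanding $\widehat{\rm Id}((x,a))\ast\widehat{\rm Id}((y,b))$ and $\widehat{\rm Id}\big(\widehat{\rm Id}((x,a))\ast(y,b)+(x,a)\ast\widehat{\rm Id}((y,b))\big)$ and comparing, but routing through Proposition~\ref{embedding} makes this unnecessary.
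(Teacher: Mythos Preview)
Your proposal is correct and follows essentially the same route as the paper: identify $(\hat A,\ast)$ with the semidirect product $A\ltimes_{L_\succ,R_\prec}A$ (via Corollary~\ref{cor:anti-d}/Theorem~\ref{rhizaform algebra equivalent}), observe that ${\rm Id}$ is an $\mathcal O$-operator for the bimodule $(A,L_\succ,R_\prec)$, and then invoke Proposition~\ref{embedding}. Your write-up is slightly more explicit about the identifications, but the argument is the same.
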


\begin{proof}
By Corollary~\ref{cor:anti-d}, $(\hat A,\ast)$ is an associative
algebra, which is exactly
$A\ltimes_{L_\succ,R_\prec} A$. Since
${\rm Id}:A\rightarrow A$ is an  $\mathcal{O}$-operator of
$(A,\ast)$ associated to the bimodule
$(A,L_\succ,R_\prec)$, by
Proposition~\ref{embedding}, $\widehat {\rm Id}$ is a Rota--Baxter operator on the anti-associative algebra $(\hat
A,\ast)$.
\end{proof}

\section{anti-associative dendriform  algebras and
Connes cocycles}

In this section, we show that anti-associative dendriform  algebras can be obtained from nondegenerate Connes cocycles of anti-associative algebras.

A {\bf Connes cocycle} on an associative algebra $(A,\cdot)$ is an
antisymmetric bilinear form $\mathcal{B}$ satisfying
\begin{align*}
\mathcal{B}(x\cdot y, z)+\mathcal{B}(y\cdot z,
x)+\mathcal{B}(z\cdot x, y)=0,\quad \forall x,y,z\in A.
\end{align*}
It corresponds to the original definition of cyclic cohomology by
Connes (\cite{C}). Note that there is a close relation between
dendriform algebras and Connes cocycles (\cite{B}). Next we
consider a Connes cocycle on anti-associative algebras.

\begin{defn}
Let $(A,\ast)$ be an anti-associative algebra. A bilinear form $\mathcal{B}$ on $A$ is called a {\bf Connes cocycle} if
$$\mathcal{B}(a\ast b,c)+\mathcal{B}(b\ast c,a)+\mathcal{B}(c\ast a,b)=0,\quad \forall a,b,c\in A.$$
\end{defn}

\begin{theorem}\label{compatible structure}
Let $(A,\ast)$ be an anti-associative algebra and $\mathcal{B}$ be a
nondegenerate  Connes cocycle on $(A,\ast)$. Then
there exists a compatible anti-associative dendriform  algebra structure $(A$,
$\succ$, $\prec)$ on $(A,\ast)$ defined by
\begin{align}\label{non-comm}
\mathcal{B}(x\succ y, z)=\mathcal{B}(y,z\ast x),\ \
\mathcal{B}(x\prec y, z)=\mathcal{B}(x,y\ast z),\ \
\forall x,y,z\in A.
\end{align}
\end{theorem}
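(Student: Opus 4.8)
The plan is to use the nondegenerate Connes cocycle $\mathcal{B}$ to transport the bimodule structure on $A$ to the dual and then recognize the identity map as an invertible $\mathcal{O}$-operator, after which Theorem~\ref{rhizaform-invertible O} (or directly Proposition~\ref{anti}) produces the compatible rhizaform structure. Concretely, since $\mathcal{B}$ is nondegenerate, it induces a linear isomorphism $\mathcal{B}^{\natural}:A\to A^{*}$ by $\langle \mathcal{B}^{\natural}(x),y\rangle=\mathcal{B}(x,y)$. First I would verify that $(A^{*},R_{\ast}^{*},L_{\ast}^{*})$ is a bimodule of $(A,\ast)$: this is exactly the instance of the preceding Lemma applied to the regular bimodule $(A,L_{\ast},R_{\ast})$, so no new computation is needed there. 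Then I would define $\succ$ and $\prec$ by Eq.~\eqref{non-comm} and check that these are well defined, i.e. that the right-hand sides, as functionals in $z$, determine unique elements $x\succ y$ and $x\prec y$ — again immediate from nondegeneracy of $\mathcal{B}$.

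The core step is to show that $T:=(\mathcal{B}^{\natural})^{-1}:A^{*}\to A$ is an $\mathcal{O}$-operator of $(A,\ast)$ associated to the bimodule $(A^{*},R_{\ast}^{*},L_{\ast}^{*})$, that is, $T(\xi)\ast T(\eta)=T\big(R_{\ast}^{*}(T(\xi))\eta+L_{\ast}^{*}(T(\eta))\xi\big)$ for all $\xi,\eta\in A^{*}$. Writing $x=T(\xi)$, $y=T(\eta)$, this is equivalent (applying $\mathcal{B}^{\natural}$ and pairing against an arbitrary $z\in A$) to the identity
\begin{equation*}
\mathcal{B}(x\ast y,z)=\big\langle R_{\ast}^{*}(x)\mathcal{B}^{\natural}(y)+L_{\ast}^{*}(y)\mathcal{B}^{\natural}(x),z\big\rangle=\mathcal{B}(y,z\ast x)+\mathcal{B}(x,y\ast z),
\end{equation*}
which, after moving everything to one side, is precisely the Connes cocycle condition $\mathcal{B}(x\ast y,z)+\mathcal{B}(y\ast z,x)+\mathcal{B}(z\ast x,y)=0$ (using antisymmetry of $\mathcal{B}$ to match signs, or noting that the given definition does not impose antisymmetry, in which case one checks the cocycle identity directly handles the cyclic sum). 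So the $\mathcal{O}$-operator condition is literally a reformulation of the cocycle identity.

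Having established that $T$ is an invertible $\mathcal{O}$-operator, Theorem~\ref{rhizaform-invertible O} gives a compatible rhizaform structure on $A$; tracing through Proposition~\ref{anti} and Eq.~\eqref{induce}, the induced operations on $T(A^{*})=A$ are $x\succ y=T(R_{\ast}^{*}(x)\mathcal{B}^{\natural}(y))$... wait, one must be careful with which slot carries $l$ versus $r$: with the bimodule $(A^{*},l,r)=(A^{*},R_{\ast}^{*},L_{\ast}^{*})$ the formulas $u\succ v=l(T(u))v$, $u\prec v=r(T(v))u$ transported to $A$ read $x\succ y=\big(\mathcal{B}^{\natural}\big)^{-1}\!\big(R_{\ast}^{*}(x)\mathcal{B}^{\natural}(y)\big)$ and $x\prec y=\big(\mathcal{B}^{\natural}\big)^{-1}\!\big(L_{\ast}^{*}(y)\mathcal{B}^{\natural}(x)\big)$, and pairing against $z$ recovers exactly Eq.~\eqref{non-comm}. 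Finally I would check $x\succ y+x\prec y=x\ast y$ from the $\mathcal{O}$-operator identity for $T={\rm Id}$-type reasoning (the last display in the proof of Theorem~\ref{rhizaform-invertible O}), confirming compatibility. I expect the only genuine obstacle to be bookkeeping: matching the $l\leftrightarrow r$ convention in the dual bimodule with the asymmetric roles of $\succ$ and $\prec$ in Definition~\ref{defi:rhizaform algebras} so that Eq.~\eqref{non-comm} comes out with the stated slots and signs; everything else is a direct unwinding of definitions.
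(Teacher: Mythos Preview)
Your proposal is correct and follows essentially the same route as the paper: define $T=\mathcal{B}^{\natural}:A\to A^{*}$, observe that $T^{-1}$ is an invertible $\mathcal{O}$-operator of $(A,\ast)$ associated to the bimodule $(A^{*},R_{\ast}^{*},L_{\ast}^{*})$, and then invoke Theorem~\ref{rhizaform-invertible O} to obtain the operations $x\succ y=T^{-1}\big(R_{\ast}^{*}(x)T(y)\big)$ and $x\prec y=T^{-1}\big(L_{\ast}^{*}(y)T(x)\big)$, which unwind to Eq.~\eqref{non-comm}. The paper's proof simply asserts that $T^{-1}$ is an $\mathcal{O}$-operator without writing out the verification, whereas you spell it out and correctly note that the identification of the $\mathcal{O}$-operator identity with the cyclic cocycle condition uses antisymmetry of $\mathcal{B}$; this is a genuine point, since Definition~4.1 as stated omits the word ``antisymmetric'' even though the associative-case paragraph just above it includes it, so your caution there is warranted.
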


\begin{proof}
Define a linear map $T: A\rightarrow A^{*}$ by $$\langle T(x),
y\rangle=\mathcal{B}(x,y),\ \ \forall x,y\in A,$$
where, $A^*$ is a dual vector space of $A$. By Lemma \ref{lem2.9}, $(A^{*},R^{*}, L^{*})$ is a bimodule of $A$. Then $T$ is invertible
and $T^{-1}$
is an $\mathcal{O}$-operator of the anti-associative algebra $(A, \ast)$ associated to the bimodule $(A^{*},R^{*}, L^{*})$.
By Theorem \ref{anti-associative dendriform -invertible O}, there is a compatible anti-associative dendriform  algebra structure $\succ, \prec$ on $(A, \ast)$ given by
\[
x\succ y=T^{-1}\big(R^{*}(x)T(y)\big), \quad x\prec y=T^{-1}\big(L^{*}(y)T(x)\big), \quad \forall x,y\in A,
\]
which gives exactly Eq. \eqref{non-comm}.
\end{proof}

Next, we turn to the double construction of Connes cocycles. Let $(A, \ast_{A})$ be an anti-associative
algebra and suppose that there is a anti-associative algebra structure $\ast_{A^{*}}$ on its dual space $A^{*}$. We
construct an anti-associative algebra structure on the direct sum $A \oplus A^{*}$ of the underlying vector
spaces of $A$ and $A^{*}$
such that both $A$ and $A^{*}$ are subalgebras and the symmetric bilinear
form on $A\oplus A^{*}$ given by equation 
\begin{equation*}
\mathcal{B}(x+a^{*},y+b^{*})=\langle x,b^{*}\rangle+\langle
a^{*},y\rangle,\ \  \forall x,y\in A, a^{*},b^{*}\in A^{*}.
\end{equation*}
is a Connes cocycle on $A\oplus A^{*}$. Such a construction
is called a \textbf{double construction} of Connes cocycle associated to $(A, \ast_{A})$ and $(A^{*},\ast_{A^{*}})$ and we
denote it by $(T(A)=A\bowtie A^{*},\mathcal{B})$.

\begin{prop}
Let $(T(A) = A\bowtie A^{*}, \mathcal{B})$ be a double construction of Connes cocycle. Then
there exists a compatible anti-associative dendriform  algebra structure $\succ, \prec$ on $T(A)$ defined by equation \eqref{non-comm}.
Moreover, $A$ and $A^{*}$ are anti-associative dendriform  subalgebras with this product.
\end{prop}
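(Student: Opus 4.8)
The plan is to derive this proposition as a direct application of Theorem \ref{compatible structure}, with the only genuine extra content being the verification that the two subspaces $A$ and $A^{*}$ are closed under the rhizaform operations. First I would invoke Theorem \ref{compatible structure}: since $(T(A)=A\bowtie A^{*},\mathcal{B})$ is by definition a double construction of Connes cocycle, $\mathcal{B}$ is a nondegenerate Connes cocycle on the anti-associative algebra $T(A)$, so Theorem \ref{compatible structure} immediately produces a compatible rhizaform structure $\succ,\prec$ on $T(A)$ given by Eq.~\eqref{non-comm}. This settles the first assertion with essentially no work.

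The substantive part is showing $A$ and $A^{*}$ are rhizaform subalgebras. I would argue by symmetry it suffices to treat $A$. Take $x,y\in A$ and write $x\succ y = \sum_i u_i + \sum_j \phi_j$ with $u_i\in A$, $\phi_j\in A^{*}$; I want to show the $A^{*}$-component vanishes. Using nondegeneracy of $\mathcal{B}$, the $A^{*}$-component of $x\succ y$ is detected by pairing against elements of $A^{*}$, i.e.\ by the values $\mathcal{B}(x\succ y, b^{*})$ for $b^{*}\in A^{*}$ — but more precisely, since $\mathcal{B}$ restricted to $A\times A$ and to $A^{*}\times A^{*}$ is zero (both are isotropic for the hyperbolic form $\mathcal{B}$), the condition ``$x\succ y\in A$'' is equivalent to ``$\mathcal{B}(x\succ y, z)=0$ for all $z\in A$''. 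By Eq.~\eqref{non-comm}, $\mathcal{B}(x\succ y,z)=\mathcal{B}(y, z\ast x)$, and for $z,x\in A$ we have $z\ast x = z\ast_A x\in A$, hence $\mathcal{B}(y,z\ast x)=0$ since $A$ is isotropic. Therefore $x\succ y\in A$. The same computation with Eq.~\eqref{non-comm} for $\prec$, namely $\mathcal{B}(x\prec y,z)=\mathcal{B}(x,y\ast z)$ with $y,z\in A$ forcing $y\ast z\in A$, gives $x\prec y\in A$. Thus $A$ is a rhizaform subalgebra, and replacing $A$ by $A^{*}$ (which is equally isotropic and a subalgebra) gives the claim for $A^{*}$.

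I would also record the point that the $\succ$ and $\prec$ obtained on $A$ this way agree with whatever natural structure one expects, but strictly this is not asked — the statement only requires that $A$ and $A^{*}$ be closed under the ambient operations, which is exactly what the isotropy argument delivers. The main (and really the only) obstacle is making precise the step ``isotropic subspace whose $\mathcal{B}$-orthogonal complement is itself,'' i.e.\ that for the hyperbolic pairing $\mathcal{B}$ on $A\oplus A^{*}$ one has $A^{\perp}=A$ and $(A^{*})^{\perp}=A^{*}$; this is immediate from the explicit formula $\mathcal{B}(x+a^{*},y+b^{*})=\langle x,b^{*}\rangle+\langle a^{*},y\rangle$, so the proof is short. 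I would close by noting that the restrictions of $\succ,\prec$ to $A$ and to $A^{*}$ automatically satisfy Eqs.~\eqref{def:id1}--\eqref{def:id3} since those hold on all of $T(A)$.
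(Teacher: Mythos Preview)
Your proposal is correct and follows essentially the same approach as the paper: invoke Theorem~\ref{compatible structure} for the existence of the compatible rhizaform structure, then use isotropy of $A$ and $A^{*}$ together with Eq.~\eqref{non-comm} and the fact that $A$ is an anti-associative subalgebra to show $\mathcal{B}(x\succ y,A)=0$ (and similarly for $\prec$), forcing $x\succ y\in A$. The paper phrases this by writing $x\succ y=a+b^{*}$ and killing $b^{*}$ via nondegeneracy, while you phrase it as $x\succ y\in A^{\perp}=A$; these are the same argument.
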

\begin{proof}
The first half follows from Theorem \ref{compatible structure}. Let $x, y \in A$. Set $x \succ y = a+b^{*}$, where $a\in A, \ b^{*}\in A^{*}$. Since $A$ is an anti-associative subalgebra of $T(A)$ and 
\begin{equation}\label{B(A,A)}
\mathcal{B}(A, A) = \mathcal{B}(A^{*}, A^{*})=0,
\end{equation}
we have $\mathcal{B}(b^{*}, A^{*})=0$ and
\[
\mathcal{B}(b^{*}, A) = \mathcal{B}(x \succ y-a, A) \overset{\eqref{B(A,A)}}{=} \mathcal{B}(x \succ y, A)\overset{\eqref{non-comm}}{=} \mathcal{B}(y, A \ast x) \overset{\eqref{B(A,A)}}{=}  0.
\]
Therefore $b^{*}=0$ due to the nondependence of $\mathcal{B}$. Hence $x\succ y = a \in A$. Similarly, $x \prec y \in A$.
Thus $A$ is an anti-associative dendriform  subalgebra of $T(A)$ with the product $\succ, \prec$. By symmetry of $A$ and $A^{*}$,
$A^{*}$ is also an anti-associative dendriform  subalgebra.
\end{proof}

\begin{defn}
Let $(T(A_1) = A_1 \bowtie A_1^{*}, \mathcal{B}_1)$ and $(T(A_2) = A_2 \bowtie A_2^{*}, \mathcal{B}_2)$ be two double constructions of Connes cocycles. They are isomorphic if there exists an isomorphism of anti-associative
algebras $\phi : T(A_1) \rightarrow T(A_2)$ satisfying the following conditions:
\begin{equation}\label{isomorphism}
\phi(A_1) = A_2,\  \phi(A_1^{*}) = A_2^{*}, \  \mathcal{B}_1(x, y) = \phi^{*}\mathcal{B}_2(x, y) = \mathcal{B}_2(\phi(x),\phi(y)), \quad \forall x, y \in A_1.
\end{equation}
\end{defn}

\begin{prop}
Two double constructions of Connes cocycles $(T(A_1) = A_1 \bowtie A_1^{*}, \mathcal{B}_1)$
and $(T(A_2) = A_2 \bowtie A_2^{*}, \mathcal{B}_2)$ are isomorphic if and only if there exists an anti-associative dendriform  algebra
isomorphism $\phi : T(A_1) \rightarrow T(A_2)$ satisfying Eq. \eqref{isomorphism}, where the anti-associative dendriform  algebra
structures on $T(A_1)$ and $T(A_2)$ are given by Eq. \eqref{non-comm}  respectively.
\end{prop}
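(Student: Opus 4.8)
The plan is to prove the equivalence by observing that the rhizaform algebra structures on $T(A_1)$ and $T(A_2)$ produced by Theorem~\ref{compatible structure} are \emph{determined} by the pair consisting of the anti-associative multiplication and the Connes cocycle, via Eq.~\eqref{non-comm}. Thus a map $\phi$ satisfying Eq.~\eqref{isomorphism} automatically intertwines not only the anti-associative products but also the derived operations $\succ,\prec$, and conversely a rhizaform isomorphism satisfying Eq.~\eqref{isomorphism} is in particular an anti-associative isomorphism satisfying Eq.~\eqref{isomorphism}. The two implications are therefore almost symmetric, the only real content being the passage from an anti-associative isomorphism compatible with $\mathcal{B}$ to compatibility with $\succ,\prec$.

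First I would set up notation: write $\succ_i,\prec_i$ for the rhizaform operations on $T(A_i)$ given by Eq.~\eqref{non-comm}, so that for all $x,y,z\in T(A_i)$ we have $\mathcal{B}_i(x\succ_i y,z)=\mathcal{B}_i(y,z\ast_i x)$ and $\mathcal{B}_i(x\prec_i y,z)=\mathcal{B}_i(x,y\ast_i z)$. For the forward direction, assume $\phi:T(A_1)\to T(A_2)$ is an isomorphism of double constructions of Connes cocycles in the sense of Definition; in particular $\phi$ is an anti-associative algebra isomorphism and $\mathcal{B}_1(u,v)=\mathcal{B}_2(\phi(u),\phi(v))$ for all $u,v\in T(A_1)$ (the identity in Eq.~\eqref{isomorphism} is stated for $x,y\in A_1$, but because $\phi$ preserves the decomposition, $\phi(A_1)=A_2$, $\phi(A_1^*)=A_2^*$, and $\mathcal{B}_i$ vanishes on $A_i\times A_i$ and on $A_i^*\times A_i^*$, the pairing identity extends to all of $T(A_1)$; I would spell this extension out as the first small step). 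Then for $x,y,z\in T(A_1)$,
\[
\mathcal{B}_2\big(\phi(x\succ_1 y),\phi(z)\big)=\mathcal{B}_1(x\succ_1 y,z)=\mathcal{B}_1(y,z\ast_1 x)=\mathcal{B}_2\big(\phi(y),\phi(z\ast_1 x)\big)=\mathcal{B}_2\big(\phi(y),\phi(z)\ast_2\phi(x)\big),
\]
and the last expression equals $\mathcal{B}_2(\phi(x)\succ_2\phi(y),\phi(z))$ by the defining relation for $\succ_2$. Since $\phi$ is bijective and $\mathcal{B}_2$ is nondegenerate, $\phi(x\succ_1 y)=\phi(x)\succ_2\phi(y)$; the identical computation with $\prec$ gives $\phi(x\prec_1 y)=\phi(x)\prec_2\phi(y)$. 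Hence $\phi$ is a rhizaform algebra isomorphism still satisfying Eq.~\eqref{isomorphism}.

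For the converse, suppose $\phi:T(A_1)\to T(A_2)$ is a rhizaform algebra isomorphism satisfying Eq.~\eqref{isomorphism}. Then $\phi$ preserves the associated anti-associative products, since $x\ast_i y=x\succ_i y+x\prec_i y$ by Theorem~\ref{property-1}(i), so $\phi(x\ast_1 y)=\phi(x\succ_1 y)+\phi(x\prec_1 y)=\phi(x)\succ_2\phi(y)+\phi(x)\prec_2\phi(y)=\phi(x)\ast_2\phi(y)$; thus $\phi$ is an anti-associative algebra isomorphism, and together with the remaining conditions of Eq.~\eqref{isomorphism} this is exactly the requirement in Definition for an isomorphism of double constructions of Connes cocycles. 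This completes the equivalence.

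The computation is routine once the pairing-intertwining identity is in hand, so the only step requiring care is the first one: justifying that $\mathcal{B}_1(u,v)=\mathcal{B}_2(\phi(u),\phi(v))$ holds for \emph{all} $u,v\in T(A_1)$ and not merely for $u,v\in A_1$. The argument is that any $u\in T(A_1)$ decomposes as $u=x+a^*$ with $x\in A_1$, $a^*\in A_1^*$, and $\mathcal{B}_i$ is the canonical pairing between the $A_i$-part and the $A_i^*$-part; since $\phi(A_1)=A_2$ and $\phi(A_1^*)=A_2^*$, expanding $\mathcal{B}_1(u,v)$ and $\mathcal{B}_2(\phi u,\phi v)$ bilinearly reduces everything to the cross terms, and the hypothesis in Eq.~\eqref{isomorphism} — which I would note can be taken in the stronger "all of $T(A_1)$" form, or derived from the $A_1$-form plus the fact that $\phi$ respects the subspace decomposition and $\mathcal{B}_i$ pairs $A_i$ with $A_i^*$ — handles these. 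I expect this bookkeeping to be the main (minor) obstacle; everything else follows from nondegeneracy of $\mathcal{B}_2$ and bijectivity of $\phi$.
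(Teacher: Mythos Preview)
Your proposal is correct and is precisely a fleshed-out version of what the paper has in mind: the paper's own proof reads in its entirety ``It is straightforward.'' Your observation about the quantifier in Eq.~\eqref{isomorphism} (that the pairing condition should really hold for all $u,v\in T(A_1)$, since on $A_1\times A_1$ both sides vanish) is well taken and is almost certainly the intended reading; with that understood, your two computations are exactly the routine verification the paper is gesturing at.
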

\begin{proof}
It is straightforward.
\end{proof}

\section{Isomorphism classes of two  dimensional
anti-associative dendriform  algebras}

In this section, we classify two-dimensional anti-associative dendriform algebras. Moreover, we introduce the annihilator of anti-associative dendriform algebras and demonstrate its role in the construction of new algebras from old ones. 

\begin{prop}
Let $(A, \succ, \prec)$
be an anti-associative dendriform  algebra. If one of the
binary operations $\succ, \ \prec$ in $A$ vanishes, then with respect to the second one, the algebra $A$ is anti-associative.
\end{prop}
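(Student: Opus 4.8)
The plan is to argue by a direct case split on which of the two operations vanishes, reading off the conclusion from the defining identities \eqref{def:id1}--\eqref{def:id3} of a rhizaform algebra together with the formula \eqref{cdot} for the associated product $\ast$.

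First I would suppose $\prec = 0$. Then Eq. \eqref{cdot} gives $x\ast y = x\succ y$ for all $x,y\in A$, so identity \eqref{def:id1} becomes $(x\succ y)\succ z = -x\succ(y\succ z)$, which is precisely the statement that $(A,\succ)$ is anti-associative; identities \eqref{def:id2} and \eqref{def:id3} reduce to the trivial equality $0=0$ since every term contains a factor $\prec$. Hence $(A,\succ)$ is an anti-associative algebra.

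Symmetrically, I would suppose $\succ = 0$. Then \eqref{cdot} gives $x\ast y = x\prec y$, so identity \eqref{def:id2} becomes $x\prec(y\prec z) = -(x\prec y)\prec z$, i.e. $(A,\prec)$ is anti-associative, while \eqref{def:id1} and \eqref{def:id3} are vacuous for the same reason as above. This exhausts the two cases and completes the proof.

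Since both cases amount to substituting a vanishing operation into three identities and simplifying, there is no real obstacle here; the only thing to be careful about is bookkeeping, namely making sure that in each case exactly one of the three rhizaform identities survives and that it coincides, after rewriting $\ast$ via \eqref{cdot}, with the anti-associativity relation for the surviving operation.
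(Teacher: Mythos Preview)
Your proof is correct and is exactly the straightforward verification the paper has in mind; the paper itself simply writes ``It is straightforward'' without spelling out the two cases. Your case split and bookkeeping are precisely what is needed.
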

\begin{proof}
It is straightforward.
\end{proof}

\begin{defn}
An anti-associative dendriform  algebra $(A, \succ, \prec)$ is $2$-nilpotent if $(x*_{i_1} y)*_{i_2} z = x*_{i_3} (y*_{i_4} z) = 0$
for all $*_{i_1}, *_{i_2}, *_{i_3}, *_{i_4}~\in~\{\succ, \prec\}$ and $x, y, z \in A$.
\end{defn}

\begin{prop}\label{2-nilpotent}
Let $(A, \succ, \prec)$ be an anti-associative dendriform algebra satisfying $x \succ y + x \prec y = 0$ for all $x, y \in A$. Then $(A, \succ, \prec$) is a 2-nilpotent algebra.
\end{prop}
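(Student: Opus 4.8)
The plan is to unwind the hypothesis first. Saying that $x\succ y+x\prec y=0$ for all $x,y\in A$ is exactly saying that the bilinear operation $\ast$ of Eq.~\eqref{cdot} vanishes identically, equivalently $x\prec y=-(x\succ y)$ for all $x,y\in A$. I would then substitute this into the three defining identities \eqref{def:id1}--\eqref{def:id3} of a rhizaform algebra (each of which carries an $\ast$-product on one side) and watch everything collapse to $0$.

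The first step is to record two ``primitive'' vanishing relations straight from the axioms: substituting $x\ast y=0$ in \eqref{def:id1} gives $x\succ(y\succ z)=0$, and substituting $y\ast z=0$ in \eqref{def:id2} gives $(x\prec y)\prec z=0$, for all $x,y,z\in A$ (so in fact only two of the three axioms are needed, and \eqref{def:id3} becomes a consistency check). The second step is to propagate these to all eight triple products by repeatedly using $\prec=-\succ$ in a single argument slot: $x\succ(y\prec z)=-x\succ(y\succ z)=0$; then $(x\succ y)\prec z=-(x\prec y)\prec z=0$; then, writing $w=x\succ y$, $(x\succ y)\succ z=w\succ z=-(w\prec z)=-(x\succ y)\prec z=0$; and the three remaining ones $x\prec(y\succ z)$, $x\prec(y\prec z)$, $(x\prec y)\succ z$ vanish by the same one-line substitutions. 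Hence $(x*_{i_1}y)*_{i_2}z=x*_{i_3}(y*_{i_4}z)=0$ for all $*_{i_j}\in\{\succ,\prec\}$ and all $x,y,z\in A$, which is precisely the definition of $2$-nilpotency.

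I do not expect any real obstacle: the statement is as routine as the author's ``obvious'' suggests. The only point demanding care is the bookkeeping — checking that all four left-nested and all four right-nested nestings are accounted for, and that each sign flip coming from $\prec=-\succ$ is applied in the correct argument position. (One should also read ``anti-dendriform algebra'' in the statement as a rhizaform algebra, since the hypothesis $x\succ y+x\prec y=0$ and the notion of $2$-nilpotency in the conclusion are those attached to Definition~\ref{defi:rhizaform algebras}.)
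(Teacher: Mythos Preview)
Your proposal is correct and follows essentially the same route as the paper: both arguments first extract the two ``primitive'' vanishings $x\succ(y\succ z)=0$ and $(x\prec y)\prec z=0$ from \eqref{def:id1}--\eqref{def:id2} by setting $\ast=0$, and then propagate to the remaining six triple products by repeatedly swapping $\prec\leftrightarrow -\succ$ in one slot. The only cosmetic difference is that the paper invokes \eqref{def:id3} for the case $x\succ(y\prec z)$, whereas you correctly observe this axiom is redundant here.
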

\begin{proof}
Let $x, y, z \in A$. Then, we obtain
\[
\begin{split}
&x\succ(y\succ z)\stackrel{\eqref{def:id1}}{=}-(x\succ y+x\prec y)\succ z=0,\\
&(x\prec y)\prec z\stackrel{\eqref{def:id2}}{=}-x\prec(y\succ z+y\prec z)=0,\\
\end{split}
\]
Using these results we can get
\[
\begin{split}
&x\prec(y\succ z)=x\prec(y\succ z)+x\succ(y\succ z)=0,\\
&(x\prec y)\succ z=(x\prec y)\succ z+(x\prec y)\prec z=0.
\end{split}
\]
Furthermore, we have
\[
\begin{split}
&(x\succ y)\succ z=(x\succ y)\succ z+(x\prec y)\succ z=(x\succ y+x\prec y)\succ z=0,\\
&x\prec (y\prec z)=x\prec (y\prec z)+x\prec (y\succ z)=x\prec (y\prec z+y\succ z)=0,\\
&(x\succ y)\prec z=(x\succ y)\prec z+(x\succ y)\succ z=0,\\
&x\succ (y\prec z)\stackrel{\eqref{def:id3}}{=}(x\succ y)\prec z=0.
\end{split}
\]
\end{proof}

Now, we classify anti-associative dendriform  algebra structures on
$2$-dimensional complex vector space.

\begin{theorem}\label{thm5.8}
Any non-trivial two-dimensional anti-associative dendriform  algebra can be
included in one of the following classes of algebras:

$Rh_1:\quad e_1\prec e_1=e_2$

$Rh_2[\lambda]:\quad e_1\succ e_1=e_2, \ e_1\prec e_1=\lambda e_2, \quad \lambda\in \mathbb{C}.$
\end{theorem}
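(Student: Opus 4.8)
The plan is to start from an arbitrary $2$-dimensional rhizaform algebra $(A,\succ,\prec)$ with basis $\{e_1,e_2\}$, pass to its associated anti-associative algebra $(A,\ast)$ via Eq.~\eqref{cdot}, and use the known classification of $2$-dimensional anti-associative algebras over $\mathbb{C}$: up to isomorphism either $(A,\ast)$ is abelian, or it is the algebra with $e_1\ast e_1=e_2$ as the only nonzero product (see \cite{Remm}). This splits the argument into two cases, and in each case one determines which rhizaform structures $(\succ,\prec)$ are compatible, then reduces modulo isomorphism.

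First I would treat the case where $(A,\ast)$ is the nontrivial algebra $e_1\ast e_1=e_2$. This is exactly the computation already carried out in Example~\ref{example}: writing $e_i\succ e_j=\alpha_{ij}e_1+\beta_{ij}e_2$ forces $e_i\prec e_j$ through $e_i\ast e_j=e_i\succ e_j+e_i\prec e_j$, and imposing Eqs.~\eqref{def:id1}--\eqref{def:id3} on all triples of basis elements kills every structure constant except those appearing in
\[
e_1\succ e_1=\gamma e_2,\qquad e_1\prec e_1=(1-\gamma)e_2,\qquad \gamma\in\mathbb{C}.
\]
A change of basis $e_1\mapsto \mu e_1$ (which scales $e_2=e_1\ast e_1\mapsto \mu^2 e_2$) rescales both products by the same factor $\mu^2$, so it cannot alter the ratio; hence the family is parametrized by $\gamma$, and I would record that it reorganizes as $Rh_1$ (the case $\gamma=0$, giving $e_1\prec e_1=e_2$) and $Rh_2[\lambda]$ with $\lambda=(1-\gamma)/\gamma$ for $\gamma\neq0$ — noting that the associated anti-associative algebra being nonabelian forces $\gamma+(1-\gamma)=1\neq0$, so no constraint $\lambda\neq-1$ survives here once we also absorb the degenerate subcases into $Rh_2[\lambda]$ and $Rh_1$. (I would double-check against the statement of Example~\ref{example}, where the bound $\lambda\neq-1$ appears; the resolution is that $\lambda=-1$ corresponds to $\gamma$ undefined / the abelian associated algebra, which is handled in the other case.)

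Next I would treat the case $(A,\ast)=0$, i.e. $x\succ y+x\prec y=0$ for all $x,y$. By Proposition~\ref{2-nilpotent}, such a rhizaform algebra is $2$-nilpotent: every product of three elements vanishes, and in particular all the defining identities are automatically satisfied. So here the structure is governed by a single bilinear map $\succ:A\otimes A\to A$ with $\prec=-\succ$, subject to no further constraint, and the classification reduces to classifying such a $\succ$ up to the $\mathrm{GL}_2(\mathbb{C})$ action by change of basis. I would argue that a nonzero $2$-nilpotent $\succ$ on a $2$-dimensional space can, after a base change, be normalized so that $e_1\succ e_1=e_2$ and all other products are $0$ (a rank/image argument: the image of $\succ$ is a line, pick $e_2$ spanning it, then analyze the quadratic form; the generic orbit is represented by $e_1\succ e_1=e_2$). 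But this is precisely $Rh_2[\lambda]$ with $\lambda=-1$, since $e_1\prec e_1=-e_2$. Together with the zero map, which is $Rh_3$ (abelian), this exhausts the case. Combining both cases yields the three families $Rh_1$, $Rh_2[\lambda]$ ($\lambda\in\mathbb{C}$, now including $\lambda=-1$), and $Rh_3$.

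The main obstacle I anticipate is the bookkeeping of isomorphism classes, specifically reconciling the constraint ``$\lambda\neq-1$'' in Example~\ref{example} with its absence in the theorem: one must be careful that $Rh_2[-1]$ genuinely arises only from the abelian-associated case and that it is not isomorphic to any member of the nonabelian family, which follows because the associated anti-associative algebra is an isomorphism invariant. A secondary technical point is verifying that within the nonabelian case the various $\gamma$ (equivalently $\lambda$) values are pairwise non-isomorphic except for the obvious identifications already folded into $Rh_1$ versus $Rh_2[\lambda]$ — this is a short computation tracking how an isomorphism of rhizaform algebras must preserve $\ast$ and hence act as $e_1\mapsto\pm e_1$, $e_2\mapsto e_2$ on the nontrivial anti-associative algebra, after which the ratio of the $\succ$- and $\prec$-coefficients is manifestly invariant. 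Everything else is the routine triple-by-triple verification already modeled in Example~\ref{example} and in the proof of Proposition~\ref{2-nilpotent}.
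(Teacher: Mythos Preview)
Your proposal is correct and follows essentially the same route as the paper: pass to the associated anti-associative algebra, split into the nonabelian case (handled by Example~\ref{example}) and the abelian case (handled via Proposition~\ref{2-nilpotent}), and in the latter reduce to classifying the single operation $\succ$. One small wording issue: the sentence ``subject to no further constraint'' is misleading, since a bilinear map $\succ$ with $\prec=-\succ$ does \emph{not} automatically satisfy the rhizaform axioms; the correct (and clearly intended) claim is that the axioms are equivalent to $2$-nilpotency of $(A,\succ)$, which you then use. The paper phrases this last step slightly differently, observing that $(A,\succ)$ is in fact anti-associative and invoking the $2$-dimensional anti-associative classification again, whereas you normalize $2$-nilpotent structures directly; both arguments yield the same two possibilities $Rh_2[-1]$ and $Rh_3$.
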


\begin{proof}
Let $A$ be a two dimensional anti-associative dendriform  algebra. Set
\[
\begin{split}
e_i\succ e_j=&\alpha_{ij1}e_1+\alpha_{ij2}e_2,\\
e_i\prec e_j=&\beta_{ij1}e_1+\beta_{ij2}e_2, \quad 1\leq i,j\leq 2.
\end{split}
\]
Now we define $e_i\ast e_j=e_i\succ e_j+e_i\prec e_j$. Then by Theorem \ref{property-1}, the algebra $(A, \ast)$ is anti-associative. As given in Examples in \cite{Remm}, there are only two non-isomorphic anti-associative algebras i.e. abelian algebra $A_1$ and the algebra $A_2$ with non-zero multiplication $e_1\ast e_1=e_2$. 

Let the algebra $(A,\ast)$  be the algebra $A_2$. Assume that $(A, \succ, \prec)$ is a compatible
anti-associative dendriform  algebra structure on $(A, \cdot)$. Set $\alpha_{ij}:=\alpha_{ij1}, \ \beta_{ij}:=\alpha_{ij2}, \ 1\leq i,j\leq 2$.

Then we have
\[
\begin{cases}
e_1\prec e_1=e_2-\alpha_{11}e_1-\beta_{11}e_2, \quad e_1 \prec e_2 = -\alpha_{12}e_1-\beta_{12}e_2,\\
e_2\prec e_1=-\alpha_{21}e_1-\beta_{21}e_2, \quad e_2 \prec e_2 = -\alpha_{22}e_1-\beta_{22}e_2.
\end{cases}
\]

\textbf{Case $\alpha_{22}\neq0$.}
By considering Eqs. \eqref{def:id1} and \eqref{def:id2}  for the triple $\{e_2,e_2,e_2\}$  we obtain $\alpha_{21}=-\beta_{22}, \alpha_{12}=-\beta_{22}$, respectively.

Comparing the coefficients of the basis $e_1$ on the considering Eqs.  \eqref{def:id1} and \eqref{def:id2}  for the triple $\{e_1,e_1,e_1\}$  we obtain $\beta_{22}=0$.

Again, considering Eq. \eqref{def:id1} for the triple $\{e_1,e_1,e_2\}$ we get $\alpha_{22}=0$ which is a contradiction. Hence, there is no such a case.

\textbf{Case $\alpha_{22}=0$.} By considering  Eq. \eqref{def:id1} for the triples $\{e_2,e_2,e_1\}, \ \{e_2,e_2,e_2\}$ and Eq. \eqref{def:id2} for the triple $\{e_1,e_2,e_2\}$ we obtain $\alpha_{21}=0, \beta_{22}=0, \alpha_{12}=0$, respectively.

Again by considering  Eq. \eqref{def:id1} for the triples $\{e_1,e_1,e_1\}, \ \{e_1,e_1,e_2\}$ and Eq. \eqref{def:id2} for the triple $\{e_2,e_1,e_1\}$ we obtain $\alpha_{11}=0, \beta_{12}=0, \beta_{21}=0$, respectively. Then we have the anti-associative dendriform  algebra with non-zero products given by
\[
\begin{cases}
e_1\succ e_1=\gamma e_2\\
e_1\prec e_1=(1-\gamma)e_2
\end{cases}
\]
Then, if $\gamma=0$ we obtain 
\[
Rh(A)_1:\quad e_1\prec e_1=e_2,
\]
if $\gamma\neq0$ by the basis change $e'_1=e_1, e_2'=\gamma e_2$ we obtain
\[
Rh(A)_2[\lambda]:\quad e_1\succ e_1=e_2, \ e_1\prec e_1=\lambda e_2, \quad \lambda\in \mathbb{C}, \ \lambda\neq-1.
\]

Let the algebra $(A,\ast)$  be abelian algebra $A_1$. Then by
Proposition \ref{2-nilpotent} the anti-associative dendriform  algebra $(A, \succ, \prec)$ associated to the abelian anti-associative algebra is $2$-nilpotent. Thus we have
$(x \succ y) \succ z = x \succ (y \succ z) = 0$ which implies that $(A, \succ)$ is an anti-associative $2$-nilpotent algebra. Since all  $2$-dimensional anti-associative algebras are $2$-nilpotent, we get $Rh_2[-1]$.

It is possible to consider the multiplication $\prec$ as above. However, it is not difficult to show that the
constructed algebras are isomorphic. This finishes the proof.
\end{proof}

\begin{defn}
The following sets are called the annihilator of anti-associative and anti-associative dendriform  algebras, respectively:
$${\rm
Ann}_{AAs}(A)=\{x\in A~|~x\ast y=y\ast x=0,\ \forall y\in A\},$$
$${\rm
Ann}_{AAsD}(A)=\{x\in A~|~x\succ y=x\prec y=y\succ x=y\prec x=0,\ \forall y\in A\}.$$
\end{defn}

An ideal $I$ of an anti-associative dendriform  algebra $A$ is a subalgebra of the algebra $A$ that satisfies the
conditions:
$$x\succ y, \ x\prec y, \ y\succ x, \ y\prec x \in I, \ \ \mbox{for any} \ \  x\in A,\  y\in I.$$

It is obvious that the annihilator of an arbitrary anti-associative dendriform  algebra is an ideal.

Now we prove the following Proposition which is a useful tool to construct higher dimensional anti-associative dendriform  algebras.

\begin{prop}\label{quotient prop}
Let $(A, \ast)$ be an anti-associative algebra and let $(A, \succ, \prec)$ be a compatible anti-associative dendriform 
algebra structure on $(A, \ast)$. If the annihilator $(\rm{Ann}_{AAs}(A), \ast)$ of $(A, \ast)$ is the annihilator $(\rm{Ann}_{AAsD}(A), \succ, \prec)$ of $(A, \succ, \prec)$, then
the quotient $(A/\rm{Ann}_{AAsD}(A), \succ, \prec$) is a compatible anti-associative dendriform  algebra structure on $(A/\rm{Ann}_{AAs}(A), \ast)$.
\end{prop}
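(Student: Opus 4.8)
The plan is to show that $Z(A)$ is simultaneously an ideal of the anti-associative algebra $(A,\ast)$ and of the rhizaform algebra $(A,\succ,\prec)$, to transport both structures to the quotient $A/Z(A)$, and then to check that the two anti-associative structures one obtains on $A/Z(A)$ --- one by quotienting $\ast$, the other as the associated anti-associative algebra of the quotient rhizaform algebra --- coincide.

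First I would record the general inclusion ${\rm Z}_{Rh}(A)\subseteq {\rm Z}_{AAs}(A)$: if the four products $x\succ y$, $x\prec y$, $y\succ x$, $y\prec x$ vanish for every $y\in A$, then by Eq.~\eqref{cdot} so do $x\ast y$ and $y\ast x$. Hence the hypothesis of the proposition amounts exactly to the reverse inclusion, and I may write $Z(A)$ for this common subspace. Reading $Z(A)$ as ${\rm Z}_{AAs}(A)$, it lies in the annihilator of $(A,\ast)$, so $a\ast x=x\ast a=0\in Z(A)$ for all $a\in A$ and $x\in Z(A)$, i.e.\ $Z(A)$ is an ideal of $(A,\ast)$; reading it as ${\rm Z}_{Rh}(A)$, every rhizaform product of an element of $Z(A)$ with an arbitrary element is $0$, so $Z(A)$ is a rhizaform ideal (as was already noted for the center of any rhizaform algebra). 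In particular the quotients $(A/Z(A),\ast)$ and $(A/Z(A),\succ,\prec)$ both make sense.

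Next I would push everything through the canonical projection $\pi\colon A\to \bar A:=A/Z(A)$. Since $Z(A)$ is an ideal for both structures, the operations $\succ$, $\prec$ and $\ast$ descend to bilinear maps $\bar\succ$, $\bar\prec$, $\bar\ast$ on $\bar A$ making $\pi$ a homomorphism for each. Applying $\pi$ to the rhizaform identities \eqref{def:id1}--\eqref{def:id3} of $A$ then shows that $(\bar A,\bar\succ,\bar\prec)$ is a rhizaform algebra, whose associated anti-associative algebra has product $\bar\succ+\bar\prec$ by Theorem~\ref{property-1}(i). Applying $\pi$ to Eq.~\eqref{cdot} gives $\bar x\,\bar\ast\,\bar y=\bar x\,\bar\succ\,\bar y+\bar x\,\bar\prec\,\bar y$, so this product is exactly $\bar\ast$. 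Therefore $(\bar A,\bar\succ,\bar\prec)$ is a compatible rhizaform algebra structure on $(\bar A,\bar\ast)=(A/Z(A),\ast)$, as claimed.

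I do not expect a genuine obstacle: the argument is a routine transfer along a quotient map. The only place where the hypothesis is essential --- and the reason it is imposed --- is to guarantee that the $\ast$-center ${\rm Z}_{AAs}(A)$ is a rhizaform ideal, so that quotienting by it is meaningful on the rhizaform side; the condition ${\rm Z}_{AAs}(A)={\rm Z}_{Rh}(A)$ is precisely what secures this, and once it is granted the identity $\ast=\succ+\prec$ forces the two anti-associative structures on the quotient to agree.
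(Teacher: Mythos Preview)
Your proposal is correct and follows essentially the same route as the paper: both reduce the statement to checking that the induced operations on $A/Z(A)$ satisfy $\bar x\ast\bar y=\bar x\succ\bar y+\bar x\prec\bar y$, after observing that $Z(A)$ is an ideal for both structures so that the quotients are well defined. Your write-up is in fact more thorough than the paper's --- you spell out the inclusion ${\rm Z}_{Rh}(A)\subseteq{\rm Z}_{AAs}(A)$, explain where the hypothesis is used, and note explicitly that the rhizaform identities descend along $\pi$ --- whereas the paper's proof records only the commutative diagram and the key compatibility computation $\bar x\ast\bar y=\overline{x\ast y}$.
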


\begin{proof}
The Proposition can be described as follows
$$
\begin{CD}
(A, \succ, \prec) @>\quad\quad x\ast y=x\succ y+x\prec y\quad\quad>> (A, \ast)  \\
@V{\bar{x}=x+\rm{Ann}_{AAsD}(A)}VV @VV{\bar{x}=x+\rm{Ann}_{AAs}(A)}V  \\
(A/\rm{Ann}_{AAsD}(A), \succ, \prec) @>>\quad\quad\bar{x}\ast \bar{y}=\bar{x}\succ \bar{y}+\bar{x}\prec \bar{y}\quad\quad> (A/\rm{Ann}_{AAs}(A), \ast)
\end{CD}
$$
To show that $(\rm{Ann}_{AAsD}(A), \succ, \prec)$ is a compatible anti-associative dendriform  algebra structure on $(A/\rm{Ann}_{AAs}(A), \ast)$, it is
sufficient to show $\bar{x}\ast \bar{y}=\overline{x\ast y}$. The structure is given by $\bar{x}\ast\bar{y}=\bar{x}\succ \bar{y}+\bar{x}\prec\bar{y}$. Indeed,
$$
\begin{array}{lll}
\bar{x}\ast\bar{y}&=&\bar{x}\succ \bar{y}+\bar{x}\prec\bar{y}\\
&=&(x+\rm{Ann}_{AAsD}(A))\succ(y+\rm{Ann}_{AAsD}(A))+(x+\rm{Ann}_{AAsD}(A))\prec(y+\rm{Ann}_{AAsD}(A))\\
&=&(x\succ y+x\prec y)+\rm{Ann}_{AAs}(A)=\overline{x\ast y}.
\end{array}
$$
\end{proof}

In the following example, we show an application of Proposition \ref{quotient prop}.
\begin{exam}
Let $(A,\ast)$ be an anti-associative algebra with non-zero multiplications: $e_1\ast e_2=-e_2\ast e_1=e_3$. Then $\langle e_3\rangle$ is the annihilator of the algebra $A$. Assume $(\langle e_3 \rangle, \prec, \succ)$ is the annihilator of a three-dimensional anti-associative dendriform algebra $(A, \prec, \succ)$ and $(A/\langle e_3\rangle, \prec, \succ)$ is the algebra $Rh_2[-1]$. Then, according to Proposition \ref{quotient prop}, we can write
\[
\begin{cases}
e_1\succ e_1= e_2+\alpha_1 e_3,\quad  e_1\succ e_2=\alpha_2 e_3, \quad e_2\succ e_1 = \alpha_3 e_3, \quad e_2\succ e_2=\alpha_4e_3\\ e_1\prec e_1=-e_2-\alpha_1 e_3, \quad e_1\prec e_2=(1-\alpha_2) e_3, \quad e_2\prec e_1=-(1+\alpha_3)e_3, \quad e_2\prec e_2=-\alpha_4e_3.
\end{cases}
\]
Using Eqs \eqref{def:id1}-\eqref{def:id3} for the triples $\{e_1, e_1, e_1\}$ and $\{e_2,e_1,e_1\}$ we obtain $\alpha_2=\alpha_4=0, \alpha_3=-1$. Now we consider the basis change $e'_2 = e_2 +\alpha_1e_3$. Then we obtain the algebra
\[
e_1\succ e_1=e_2,\quad   e_2\succ e_1 = -e_3,\quad  e_1\prec e_1=-e_2, \quad e_1\prec e_2=e_3.
\]
\end{exam}

\section*{Acknowledgements}

The author would like to express his gratitude to Yunhe Sheng for many fruitful discussions and
suggestions in the preparation of this article. The author is very grateful to the referee for useful comments and suggestions.

\section{Declarations}

\textbf{Ethical Approval }

not applicable;

\textbf{Funding} 

not applicable;

\textbf{Availability of data and materials}

not applicable:

\end{document}